\setlist[1]{itemsep=5pt}
\newcommand{\comment}[1]{}
      \def\@setcopyright{}
      \def\serieslogo@{}
\newtheorem{theorem}{Theorem}[section]
\newtheorem{lemma}[theorem]{Lemma}
\newtheorem{corollary}[theorem]{Corollary}
\newtheorem{proposition}[theorem]{Proposition}
\newtheorem{definition}[theorem]{Definition}
\newtheorem{remark}[theorem]{Remark}
\numberwithin{equation}{section}
\begin{document}
\title{On the Bergman metric of a pseudoconvex domain with a strongly pseudoconvex polyhedral boundary point}
\author{Xiaojun Huang}
\address{Department of Mathematics, Rutgers University, New Brunswick, NJ 08903, USA.}
\email{huangx@math.rutgers.edu}
\thanks{Xiaojun Huang is supported in part by NSF DMS-2247151
}
\author{Scott James}
\address{Department of Mathematics, Rutgers University, New Brunswick, NJ 08903, USA.}
\email{sjj70@scarletmail.rutgers.edu}
\author{Xiaoshan Li}
\address{School of Mathematics and Statistics, Wuhan University, Wuhan, Hubei 430072, China.}
\email{xiaoshanli@whu.edu.cn}
\thanks{Xiaoshan Li is supported in part by NSFC (12361131577, 12271411)}

\begin{abstract}

 \centerline{\it Dedicated to  Professors Harold  Boas and Emil  Straube on the occasion of their retirement} 

 \bigskip\bigskip
    Let $D\subset\mathbb{C}^n$ with $n>1$  be a pseudoconvex domain, possibly unbounded, that contains a non-smooth strongly pseudoconvex polyhedral boundary point. We show that the Bergman metric of $D$ is not Einstein.
\end{abstract}
\maketitle

\section{Introduction}

For any bounded domain $D\subset \mathbb{C}^n$, its Bergman metric is an  invariant Kähler metric.
Cheng and Yau \cite{CY80} proved  that every bounded pseudoconvex domain in $\mathbb{C}^n$ with a $C^2$-smooth boundary admits a unique complete K\"ahler–Einstein metric (up to a scaling factor)  which is also biholomorphically invariant.
Later,  Mok and Yau \cite{MY80}  removed the  boundary regularity assumption and proved the existence of such a metric for arbitrary bounded pseudoconvex domains.

A natural problem arising from these works is to determine under what circumstances these two important  invariant metrics  coincide. A classical conjecture  of Yau \cite{Yau} states that the Bergman metric of a bounded pseudoconvex domain is Einstein if and only if it is biholomorphic to a bounded homogeneous domain.  Earlier,
Cheng \cite{C79}  had conjectured that the Bergman metric of a smoothly bounded strongly pseudoconvex domain is K\"ahler–Einstein if and only if the domain is biholomorphic to the unit ball.
Cheng's conjecture was confirmed in dimension two by Fu–Wong \cite{FW97} and Nemirovski--Shafikov \cite{NS06}, and was resolved in all dimensions by Huang–Xiao \cite{HX16} based on earlier work of many authors.
Subsequent generalizations were obtained for Stein manifolds and Stein spaces with compact strongly pseudoconvex boundaries; see Huang–Li \cite{HL23}, Ebenfelt–Xiao–Xu \cite{EXX22,EXX24}, and references therein.
Related variations of Cheng’s conjecture were  also discussed by S. Li in his papers \cite{L1, L2, L3}.

In a more recent development, Savale and Xiao \cite{SX23} investigated Bergman –Einstein metrics on smoothly bounded  pseudoconvex domains in $\mathbb{C}^2$.
They proved that a smoothly bounded pseudoconvex domain of finite type in  $\mathbb{C}^2$, whose Bergman metric is Einstein, must be biholomorphic to the unit ball in $\mathbb{C}^2$.
A prior result by Fu–Wong \cite{FW97} established an analogous statement for smoothly bounded complete Reinhardt pseudoconvex domains of finite type in  $\mathbb{C}^2$.

Despite these advances, not much is known about the Einstein property of Bergman metrics on unbounded pseudoconvex domains or in bounded pseudoconvex domains with  rough boundary points. In this paper, we aim to conduct a study along these lines.
We will show that the Bergman metric of a pseudoconvex domain, possibly unbounded, which possesses a strongly pseudoconvex polyhedral boundary point (as defined in Definition \ref{def1-9-24}) is not Einstein.
\begin{theorem}\label{main theorem}
Let $\Omega\subset\mathbb{C}^{n}$, with $n>1$,  be a (possibly unbounded) pseudoconvex domain.  
If $\Omega$  possesses a  strongly pseudoconvex polyhedral boundary point,
 \ then the Bergman metric of $\Omega$ is well-defined in a nonempty open subset of $\Omega$, denoted by $\Omega^* $,  and the Bergman metric  cannot be  Einstein on any open subset of  $\Omega^*$.
\end{theorem}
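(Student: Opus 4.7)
I would argue by contradiction via a biholomorphic rescaling at the strongly pseudoconvex polyhedral boundary point $p\in\partial\Omega$. Suppose that the Bergman metric $g_B$ of $\Omega$ satisfies $\mathrm{Ric}(g_B)=c\,g_B$ on some nonempty open set $U\subset\Omega^*$. Since $K_\Omega(z,\bar z)$ is real-analytic on $\Omega^*$, both $g_B$ and $\mathrm{Ric}(g_B)$ are real-analytic tensor fields, so the Einstein equation propagates by analytic continuation throughout the connected component of $\Omega^*$ containing $U$, and in particular along any sequence $p_\nu\in\Omega^*$ approaching $p$ from within this component.

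\textbf{Scaling to a model.} In suitable local coordinates the domain $\Omega$ agrees near $p$ with the transversal intersection of $k\ge 2$ smooth strongly pseudoconvex hypersurface pieces $\{\rho_j<0\}$. Choosing $p_\nu\to p$ from within $\Omega^*$ and applying a Pinchuk-type anisotropic dilation $\Phi_\nu$ adapted to the normalized Levi forms of the $\rho_j$, one obtains in the limit a model domain
\[
\widehat\Omega \;=\; \bigcap_{j=1}^{k}\bigl\{ z\in\Complex^n : 2\,\mathrm{Re}\,L_j(z) + H_j(z,\bar z) < 0 \bigr\},
\]
a transversal intersection of Siegel half-spaces carrying the Levi data of $\Omega$ at $p$. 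Invoking the standard stability theorems for Bergman kernels and metrics under Pinchuk-type scaling (Bedford--Pinchuk, Berteloot, and successors), one should obtain $\Phi_\nu^* g_B\to\widehat g_B$ locally uniformly, with sufficiently many derivatives, on $\widehat\Omega$, where $\widehat g_B$ is the Bergman metric of $\widehat\Omega$. As the Einstein condition is biholomorphically invariant and closed under $C^2$-convergence, $\widehat g_B$ itself would then have to be Einstein.

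\textbf{Non-Einstein at the model; the main obstacle.} The remaining, and decisive, step is to show that $\widehat g_B$ cannot be Einstein. The model $\widehat\Omega$ admits a large continuous automorphism group --- real translations along the Siegel variables of each defining piece together with appropriate unitary rotations on the transverse coordinates --- which by invariance forces $\widehat g_B$ to decompose along the strata of $\partial\widehat\Omega$. I would then analyze the boundary asymptotics of the Bergman kernel $\widehat K$ near the corner stratum where all $k$ pieces meet: near a smooth strongly pseudoconvex boundary point, Fefferman's expansion gives $\widehat K\sim \rho_j^{-(n+1)}$ with an asymptotically constant holomorphic sectional curvature, whereas approach to the corner produces a combined, anisotropic singularity making the asymptotics of $\widehat g_B$ and $\mathrm{Ric}(\widehat g_B)$ incompatible with any relation $\mathrm{Ric}=c\,\widehat g_B$. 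Carrying this out rigorously --- whether by a secondary rescaling that reduces the corner to a product Siegel model with an explicitly computable kernel, or through a representation-theoretic decomposition of the Bergman space $A^2(\widehat\Omega)$ under the automorphism action --- is where I expect the bulk of the analytic work to lie, and contradicting the Einstein relation at $\widehat\Omega$ will complete the argument.
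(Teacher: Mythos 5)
Your proposal stops exactly where the real work begins: you reduce everything to showing that the rescaled model domain is not Bergman--Einstein, and then leave that step open. As stated it would also be hard to close, because the model you produce --- the transversal intersection of Siegel half-spaces obtained from the standard Pinchuk-type dilation with weight $2$ on all normal directions and weight $1$ on tangential ones --- has no explicitly computable Bergman kernel, and neither the convergence $\Phi_\nu^*g_B\to\widehat g_B$ with two derivatives nor the non-Einstein property of $\widehat g_B$ is available off the shelf for such an unbounded, non-smooth limit. The paper's decisive idea, which is absent from your outline, is twofold. First, it never compares metrics under scaling: it shows that the Einstein hypothesis forces the scalar Bergman invariant $J_\Omega=\det G_\Omega/K_\Omega$ to be identically $(n+1)^n\pi^n/n!$ near $p$ (boundary limit of $J$ and of the Ricci curvature at the nearby smooth strongly pseudoconvex points via Krantz--Yu, pluriharmonicity of $\log J_\Omega$ plus attached analytic disks, then real-analytic continuation). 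Second, it chooses a deliberately \emph{anisotropic, tangential} rescaling --- weight $2$ on one normal direction, weight $3/2$ on the remaining $m-1$ normal directions, weight $1$ on the CR directions --- along points $\xi_\delta=(i\delta^2,i\delta^{3/2},\dots,i\delta^{3/2},0,\dots,0)$. With these weights the limit domain is not your Siegel intersection but (after a Cayley-type map) the product $\mathbb B^{n-m+1}\times\Delta^{m-1}$, whose invariant is explicitly $\frac{(n-m+2)^{n-m+1}2^{m-1}\pi^n}{(n-m+1)!}$; Ramadanov stability for the bounded rescaled images then gives $J_{\Omega\cap U_0}(\xi_\delta)\to J_{\mathbb B^{n-m+1}\times\Delta^{m-1}}(0)$, and an AM--GM argument shows this value differs from $(n+1)^n\pi^n/n!$, which is the contradiction.

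Two further gaps in your write-up would need repair even if you found a way to handle your model. Since $\Omega$ may be unbounded and is only known near $p$, any scaling argument applies to $\Omega\cap U_0$, not to $\Omega$; you must first localize the relevant Bergman quantities (the paper does this with a peak function at $p$ and H\"ormander $L^2$ estimates valid on unbounded pseudoconvex domains, which also yields the asserted nonemptiness of $\Omega^*$, a part of the statement you do not address). And your appeal to invariance under the automorphism group of the model plus ``incompatible asymptotics'' of $\mathrm{Ric}$ and $\widehat g_B$ at the corner is only a heuristic: without an explicit kernel or invariant for the limit domain there is no computation to contradict, which is precisely why the paper engineers the limit to be a ball times a polydisk.
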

\begin{corollary}
Let $\Omega\subset\mathbb{C}^{n}$, with $n>1$,  be a bounded pseudoconvex domain.  If $\Omega$  possesses a  strongly pseudoconvex polyhedral boundary point,
then its  Bergman metric cannot be Einstein.
\end{corollary}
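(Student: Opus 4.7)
The corollary is an immediate consequence of Theorem \ref{main theorem}. The plan is to verify that for a bounded pseudoconvex domain $\Omega$, the open subset $\Omega^{*}\subset\Omega$ on which the Bergman metric is well-defined coincides with all of $\Omega$, after which the corollary follows by direct citation.

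To carry this out, I would first recall the classical fact that for any bounded domain $\Omega\subset\mathbb{C}^{n}$, the Bergman space $A^{2}(\Omega)$ of square-integrable holomorphic functions contains every polynomial; indeed, boundedness gives finite Lebesgue measure, and polynomials are bounded on bounded sets. In particular, for every point $z\in\Omega$, the constants already show that the evaluation functional $f\mapsto f(z)$ on $A^{2}(\Omega)$ is nonzero, so the on-diagonal Bergman kernel satisfies $K_{\Omega}(z,z)>0$ throughout $\Omega$. Moreover, the standard separation-of-points argument (using the coordinate functions $z_{1},\dots,z_{n}$) shows that the complex Hessian $\partial\bar\partial\log K_{\Omega}(z,z)$ is positive definite at every point of $\Omega$, producing a genuine K\"ahler metric on all of $\Omega$.

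Hence $\Omega^{*}=\Omega$ in the notation of Theorem \ref{main theorem}. Since $\Omega$ has a strongly pseudoconvex polyhedral boundary point by hypothesis, the theorem asserts that the Bergman metric cannot be Einstein on any open subset of $\Omega^{*}$, and in particular it cannot be Einstein on $\Omega$ itself. There is no real obstacle at the corollary stage --- the entire difficulty is concentrated in the proof of Theorem \ref{main theorem}; the corollary merely records the conclusion in the classical bounded setting, where the well-definedness issue for the Bergman metric is automatic.
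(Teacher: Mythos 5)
Your proposal is correct and is exactly the deduction the paper intends (the corollary is stated without proof as an immediate consequence of Theorem \ref{main theorem}): for a bounded domain the coordinate functions and constants lie in $A^{2}(\Omega)$, so the Bergman metric is positive definite on all of $\Omega$, i.e.\ $\Omega^{*}=\Omega$, and the theorem then rules out the Einstein property on any open subset, in particular on $\Omega$. The only nitpick is terminological: what you invoke with the coordinate functions is the separation of holomorphic \emph{directions}, not of points.
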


One of the main tools used in the proof of Theorem \ref{main theorem} is the rescaling argument, which has been  used  to work on many related 
problems. In particular, in connection with our present work, we 
mention   the papers by Wong \cite{W77},
Kim \cite{Kim},  Kim-Yu \cite{KY}, Krantz-Yu \cite{KYu}
and Boas-Straube-Yu \cite{BSY},  where the rescaling method has been used to study the boundary limit of  various quantities associated with  the Bergman metric. Indeed, our current work  has benefited   from  their studies.  A recent application of the rescaling method can also be found in Huang-Zhu \cite{HZ}, where it is employed in solving a CR transversality problem. Another recent  application of the rescaling method was used in working on the pinched  properties of a K\"ahler metric is a recent  paper of Bracci-Gauthier-Zimmer \cite{BGZ}.

The  ideas of our proof of the main theorem can be stated briefly as follows: First, we   show that if the Bergman metric of our domain is Einstein, then its Bergman invariant function is constant and equals that of the unit ball. Next we carefully construct a special sequence of points approaching a strongly pseudoconvex polyhedral boundary point  tangentially such that the limit domain is equivalent to the product of a ball and a bidisk of lower dimension.  
To obtain such a sequence, we assign weight $2$ to one of the complex normal directions, weight $1.5$  to the other normal directions, and weight $1$  to the remaining CR directions. The main part of the paper is then devoted to showing that the Bergman invariant function of this product domain coincides with that of  $\Omega$.
A direct computation shows that the Bergman invariant function of the unit ball differs from that of the aforementioned  product domain, leading to a contradiction.  In this respect, our proof departs from earlier approaches to the Cheng conjecture and its generalizations, where the contradiction is derived via spherical CR geometry and the Qi-Keng Lu uniformization theorem \cite{HX20}.

\section{Preliminaries}
Let $\Omega$ be a domain in $\mathbb{C}^{n}$. 
In what follows, we always assume that $n>1$.  
Write  $A^{2}(\Omega)$  for its Bergman space consisting of holomorphic functions on $\Omega$  that are square-integrable with respect to the Lebesgue measure.  We assume that $A^{2}(\Omega)\not =\{0\}$.  Then it  is  a non-trivial  Hilbert space.  
Let $\{\varphi_{j}\}_{j=1}^N$ be an orthonormal basis for  $A^{2}(\Omega)$   with respect to the standard inner-product,  where $N$   can be  finite or infinite.
The Bergman kernel  function  of   $\Omega$ is then defined by:
\[
K_{\Omega}(z,\overline{z})=\sum_{j=1}^{N}\varphi_{j}(z)\overline{\varphi_{j}(z)},\quad\forall z\in\Omega.
\]
The Bergman metric $g_{\Omega}$,   when well-defined,  is given by
\[
g_{\Omega}=\sum_{i,j=1}^{n}g_{i\overline{j}}\,dz_{i}\otimes d\overline{z}_{j},\quad\text{where}\quad g_{i\overline{j}}=\frac{\partial^{2}\log K_{\Omega}}{\partial z_{i}\partial\overline{z}_{j}};
\]
and the  Bergman  norm  is defined as
\[
g_{\Omega}(z,u)=\biggl(\sum_{i,j=1}^{n}g_{i\overline{j}}(z)\,u_{i}\overline{u}_{j}\biggr)^{1/2},\quad\forall u\in\mathbb{C}^{n}.
\]
The Bergman metric on a domain $\Omega$ in $\mathbb{C}^n$  may not be defined everywhere if $\Omega$ is not bounded.  When $\Omega$  is pseudoconvex   with a strongly pseudoconvex polyhedral boundary point, the Bergman metric is   indeed defined on  an open subset  of $\Omega$  near such a boundary point.    (See, e.g.,  \cite{GHH17} and \cite{James}).
 
 A necessary and sufficient condition for the existence of the Bergman metric at $z_0\in \Omega $ is that the Bergman space $A^{2}(\Omega)$ is base point free and separates holomorphic directions at $z_0$ \cite{Kob}.
 Here we recall that $A^{2}(\Omega)$ is said to be base-point free at $z_0$ if there is an element $f\in A^{2}(\Omega)$   such that $f(z_0)\not = 0$.    $A^{2}(\Omega)$ is said to separate holomorphic directions  at $z_0$ if for any holomorphic vector $X\in T_{z_0}^{(1,0)}{\Omega}$ there is an element $f\in A^{2}(\Omega)$   such that $X(f)(z_0)\not = 0$.

 In what follows,  we will write $\Omega^*$ for the subset
 of $\Omega$ consisting of points where  $A^{2}(\Omega)$ is  base-point free and separate holomorphic directions.  We will assume that  $\Omega^*\not = \emptyset.$

The Bergman canonical invariant function  is a positive real analytic function defined over $\Omega^*$ by
\[
J_{\Omega}(z):=\frac{\det G_{\Omega}(z)}{K_{\Omega}(z,\overline{z})},\quad\text{where}\quad G_{\Omega}(z)=\bigl(g_{i\overline{j}}(z)\bigr).
\]
The Ricci curvature tensor of the Bergman metric $g_{\Omega}$  is given by
\[
R_{\Omega}=\sum_{\alpha,\beta=1}^{n}R_{\alpha\overline{\beta}}\,dz_{\alpha}\otimes d\overline{z}_{\beta}\quad\text{with}\quad R_{\alpha\overline{\beta}}=-\frac{\partial^{2}\log\det G_{\Omega}}{\partial z_{\alpha}\partial\overline{z}_{\beta}},
\]
and the Ricci curvature along the  direction $u\in\mathbb{C}^{n}\setminus\{0\}$ is given by
\[
R_{\Omega}(z,u)=\frac{\sum_{\alpha,\beta=1}^{n}R_{\alpha\overline{\beta}}\,u_{\alpha}\overline{u}_{\beta}}{g_{\Omega}(z,u)^{2}}.
\]

The Bergman metric $g_{\Omega}$ is a Kähler metric over $\Omega^*$, and  is said to be Einstein in $\Omega^*$  if there exists a constant $c$ such that
\[
R_{\Omega}=c\,g_{\Omega}.
\]


We next recall the definition of    strongly pseudoconvex polyhedral boundary points for  a domain  $\Omega\subset{\mathbb C}^n$.
\begin{definition}\label{def1-9-24}
Let $\Omega$ be a possibly unbounded domain in $\mathbb{C}^{n}$  with $n>1$ and let $p\in\partial\Omega$.  
We say that $p$  is  a  strongly pseudoconvex polyhedral boundary  point if there exists a neighborhood $U$ of $p$ in $\mathbb C^n$ and $C^{2}$-smooth strongly plurisubharmonic functions $\rho_{1},\dots,\rho_{m}\colon U\to\mathbb{R}$ with $m>1$  such that
$\Omega\cap U=\{z\in U:\rho_{1}(z)<0,\dots,\rho_{m}(z)<0\}$  and 
$\{\partial\rho_{1}|_p,\dots,\partial\rho_{m}|_p\}$  are linearly independent over $\mathbb{C}$.
\end{definition}


\section{Localization of extremal domain functions on a possibly unbounded domain}
Let $\Omega$ be a domain in $\mathbb C^n$ with  $z\in\Omega^*$. Here we recall that $\Omega^*$ is the open subset of $\Omega$ where $A^2(\Omega)$ is base point free and separates holomorphic directions. We always assume that $\Omega^*\not =\emptyset$.  Then $K_{\Omega}(z,\overline z)>0$ and  the metric matrix $G_{\Omega}(z)$ is invertible for $z\in \Omega^*$.
We have  the following:
\begin{equation}\label{e1-3-30}
\begin{split}
    &K_\Omega(z,\overline z)=\sup\{|f(z)|^2:f\in A^2(\Omega),\|f\|=1\},\\[2mm]
    &g^2_{\Omega}(z,u)=\frac{1}{K_{\Omega}(z,\overline z)}\sup\biggl\{\Bigl|\sum_{j=1}^n u_j\frac{\partial f(z)}{\partial z_j}\Bigr|^2:\|f\|_{\Omega}=1,\,f(z)=0\biggr\}.
\end{split}
\end{equation}
Further define the following  extremal domain functions (see, e.g, \cite{KYu} \cite{James}):
\begin{equation}\label{e1-3-30}
\begin{split}
\\[2mm]
    &\lambda_{\Omega}^k(z):=\sup\Bigl\{\Bigl|\frac{\partial f}{\partial z_k}(z)\Bigr|:\|f\|_{\Omega}=1,\,f(z)=0,\,\frac{\partial f}{\partial z_j}(z)=0\ (1\le j<k)\Bigr\},\\[2mm]
    &I_{\Omega}(z,u):=\sup\Bigl\{u f''(z)[\overline G_{\Omega}]^{-1}(z)\overline{f''(z)}u^*:\|f\|_{\Omega}=1,\,f(z)=f'(z)=0\Bigr\}.\\[2mm]
\end{split}
\end{equation}

Here $\|\cdot\|_{\Omega}$ denotes the $L^2$-norm on $\Omega$, $u^*$ is the conjugate-transpose of the vector $u$, $f''(z)=\bigl(\frac{\partial^{2}f}{\partial z_i\partial\overline z_j}(z)\bigr)$.

The following formulas  relates  the above quantities to the Bergman canonical invariant function  $J_{\Omega}$ and  the Ricci curvature of the Bergman metric.

\begin{proposition}
    \cite{KYu}\label{Prop 1-3-31}
Let $\Omega$ be a domain in $\mathbb C^n$ with $K_{\Omega}(z,\overline z)>0$ and $G_{\Omega}(z)$ invertible at $z\in\Omega^*$.  Then
\begin{enumerate}
\item $J_{\Omega}(z)=\dfrac{\lambda_{\Omega}(z)}{K^{n+1}_{\Omega}(z,\overline z)}$ where $\lambda_{\Omega}(z)=\lambda^1_{\Omega}(z)\cdots\lambda^n_{\Omega}(z)$;
\item $R_{\Omega}(z,u)=(n+1)-\dfrac{1}{g^2_{\Omega}(z,u)K_{\Omega}(z,\overline z)}I_{\Omega}(z,u)$ for  $u\in\mathbb C^n\setminus\{0\}$.
\end{enumerate}
\end{proposition}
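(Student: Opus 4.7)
The plan is to reduce both parts to a single Gram--matrix computation in $A^{2}(\Omega)$. Fix $z\in\Omega^{*}$ and introduce the bounded linear functionals $L_{0}(f)=f(z)$ and $L_{k}(f)=\partial f/\partial z_{k}(z)$ for $1\le k\le n$. Their Riesz representers in $A^{2}(\Omega)$ are
\[
L_{0}^{*}(w)=K_{\Omega}(w,\overline z),\qquad L_{k}^{*}(w)=\frac{\partial K_{\Omega}(w,\overline z)}{\partial\overline z_{k}},
\]
and by the reproducing property the $(n+1)\times(n+1)$ Gram matrix $M(z):=(\langle L_{i}^{*},L_{j}^{*}\rangle)_{i,j=0}^{n}$ takes the form
\[
M(z)=\begin{pmatrix} K & K_{\overline 1}&\cdots&K_{\overline n}\\ K_{1}&K_{1\overline 1}&\cdots&K_{1\overline n}\\ \vdots & & \ddots &\vdots\\ K_{n} & K_{n\overline 1}&\cdots&K_{n\overline n}\end{pmatrix},
\]
with all kernel derivatives evaluated at $(z,\overline z)$. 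The classical identity $K^{2}g_{i\overline j}=KK_{i\overline j}-K_{i}K_{\overline j}$ and a Schur--complement calculation give $\det M(z)=K_{\Omega}(z,\overline z)^{n+1}\det G_{\Omega}(z)$.

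For part (1) I apply Gram--Schmidt to $L_{0}^{*},L_{1}^{*},\dots,L_{n}^{*}$: the determinant of a Gram matrix equals the product of the squared norms of its orthogonalized vectors. By Riesz duality, the squared norm of the $k$-th orthogonalized representer is the maximum of $|L_{k}(f)|^{2}$ over unit--norm $f$ orthogonal to $L_{0}^{*},\dots,L_{k-1}^{*}$; and those orthogonality conditions translate, via $\langle f,L_{j}^{*}\rangle=L_{j}(f)$, to $f(z)=0$ and $\partial f/\partial z_{j}(z)=0$ for $1\le j<k$. These are precisely the extremal problems defining $\lambda_{\Omega}^{k}(z)$, so one obtains $\det M(z)=K_{\Omega}(z,\overline z)\cdot\bigl(\lambda_{\Omega}^{1}(z)\cdots\lambda_{\Omega}^{n}(z)\bigr)^{2}$. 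Combining with the Schur identity gives $\det G_{\Omega}(z)=\lambda_{\Omega}(z)^{2}/K_{\Omega}(z,\overline z)^{n}$, from which (1) follows after dividing by $K_{\Omega}$.

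For part (2) I take $\partial_{\alpha}\partial_{\overline\beta}\log$ of the identity $\det M=K^{n+1}\det G_{\Omega}$ and use the definitions of $R_{\alpha\overline\beta}$ and $g_{\alpha\overline\beta}$ to obtain
\[
R_{\alpha\overline\beta}(z)=(n+1)g_{\alpha\overline\beta}(z)-\partial_{\alpha}\partial_{\overline\beta}\log\det M(z).
\]
Contracting with $u^{\alpha}\overline{u}^{\beta}$ and dividing by $K_{\Omega}(z,\overline z)\,g_{\Omega}(z,u)^{2}$, formula (2) becomes equivalent to
\[
\sum_{\alpha,\beta=1}^{n}\partial_{\alpha}\partial_{\overline\beta}\log\det M(z)\cdot u^{\alpha}\overline{u}^{\beta}=\frac{I_{\Omega}(z,u)}{K_{\Omega}(z,\overline z)}.
\]
I would establish this by expanding $\partial\overline\partial\log\det M$ through Jacobi's formula $\partial\log\det M=\mathrm{tr}(M^{-1}\partial M)$, using the block--Schur inversion of $M$ whose lower--right $n\times n$ block equals $G_{\Omega}^{-1}(z)/K_{\Omega}(z,\overline z)$. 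After the first--order kernel terms $K_{\alpha}$ and $K_{\overline\beta}$ cancel --- which happens because $W_{2}:=\{f\in A^{2}(\Omega):f(z)=f'(z)=0\}$ is by definition the orthogonal complement of $\mathrm{span}(L_{0}^{*},L_{1}^{*},\dots,L_{n}^{*})$ --- what survives is precisely the squared norm on $W_{2}$ of the Riesz representer of $f\mapsto\sum_{\alpha,\beta}u^{\alpha}u^{\beta}\partial_{\alpha}\partial_{\beta}f(z)$, measured through the quadratic form $[\overline G_{\Omega}]^{-1}$; this is the supremum defining $I_{\Omega}(z,u)$.

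The main obstacle is the bookkeeping in (2): verifying that the Jacobi--formula expansion of $\partial\overline\partial\log\det M$ leaves, after all Schur--complement cancellations, exactly the clean extremal expression $I_{\Omega}(z,u)/K_{\Omega}$. Once these cancellations are carried out, the rest is routine reproducing--kernel linear algebra on the finite--dimensional Gram matrix, together with the interpretation of $[\overline G_{\Omega}]^{-1}/K_{\Omega}$ as the lower--right block of $M^{-1}$.
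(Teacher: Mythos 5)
The paper itself does not prove this proposition: it is quoted from Krantz--Yu, whose argument (Bergman minimum integrals, i.e.\ orthonormal bases of $A^2(\Omega)$ adapted to the flag $A^2\supset W_1\supset W_2$ with $W_1=\{f(z)=0\}$, $W_2=\{f(z)=0,\ f'(z)=0\}$) carries over verbatim to unbounded domains; so the comparison is with that classical computation. Your part (1) is essentially that argument in Gram-matrix form, and the mechanism is sound: $\det M=K^{n+1}\det G_\Omega$ via the Schur complement, and Gram--Schmidt plus Riesz duality evaluates $\det M$ through the successive extremal problems. But note that your own bookkeeping yields $\det M=K\,\bigl(\lambda^1_\Omega\cdots\lambda^n_\Omega\bigr)^2$ with $\lambda^k_\Omega$ as printed (a supremum of $|\partial f/\partial z_k(z)|$, not of its square), hence $J_\Omega=\lambda_\Omega^2/K^{n+1}$; the claim that ``(1) follows after dividing by $K$'' does not follow as written. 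The resolution is a convention: in Krantz--Yu the extremal quantity is the supremum of the \emph{squared} derivative (only with that normalization is the right-hand side of (1) invariant under dilations $\Omega\mapsto t\Omega$), so you must either adopt that convention explicitly or your conclusion does not match the stated identity.

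The genuine gap is in (2). Your reduction of (2) to the identity $\sum_{\alpha,\beta}\partial_\alpha\partial_{\overline\beta}\log\det M\;u_\alpha\overline u_\beta=I_\Omega(z,u)/K_\Omega(z,\overline z)$ is correct, but that identity \emph{is} the proposition, and you defer it as ``bookkeeping'' while mis-describing its outcome. What the Jacobi/Schur expansion actually leaves is not the squared norm of the representer of the scalar functional $f\mapsto\sum_{\alpha,\beta}u_\alpha u_\beta\partial_\alpha\partial_\beta f(z)$, nor a supremum over a single unit-norm element of $W_2$ at all, but a trace-type quantity: $\sum_j u\,\psi_j''(z)\,[\overline G_\Omega]^{-1}\,\overline{\psi_j''(z)}\,u^*$ summed over an orthonormal basis $\{\psi_j\}$ of $W_2$ (equivalently, a fourth-order derivative of the reproducing kernel of $W_2$ contracted with $u$, $\overline u$ and $[\overline G_\Omega]^{-1}$). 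These are genuinely different numbers: for $\mathbb B^2$ at the origin with $u=e_1$ one has $K=2/\pi^2$, $g^2=3$, $R=-1$, so the identity requires the value $24/\pi^2$; the orthonormal-basis sum (contributions from $z_1^2$ and $z_1z_2$) gives exactly $24/\pi^2$, whereas the single-function supremum you describe --- and the supremum as literally printed in the paper's definition of $I_\Omega$ --- gives only $16/\pi^2$ (extremal $f\propto z_1^2$). So, carried out honestly, your expansion will not land on the extremal expression you claim; closing (2) requires the adapted-basis computation of Fuks/Krantz--Yu, in the course of which the correct normalization of $I_\Omega$ must be fixed (note also that $f''(z)$ has to mean the holomorphic Hessian $(\partial^2 f/\partial z_i\partial z_j)$, since $\partial^2 f/\partial z_i\partial\overline z_j\equiv0$ for holomorphic $f$).
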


Krantz-Yu \cite{KYu} proved the Proposition \ref{Prop 1-3-31} for bounded domains in  $\mathbb C^n$.   The proof for unbounded domains is the same (see, e.g.,  \cite{James}).
We next need to localize the above quantites over  a possibly  unbounded domain. To this aim,  we need the following version of   the H\"ormander $L^2$-estimates  \cite[Theorem 1.14]{H65}. (See also Theorem 5 of \cite{GHH17} or Theorem 2.3.3 of \cite{Hu}.)
\begin{proposition}\label{prop1}
Let $\Omega\subset\mathbb C^n$ be a possibly unbounded pseudoconvex domain, and let $\varphi:\Omega\to[-\infty,\infty)$ be a plurisubharmonic function.  Assume that
\begin{enumerate}
\item $U\subset\Omega$ is open and $\varphi(z)-c|z|^2$ is plurisubharmonic on $U$ for some constant $c>0$, and
\item $v\in L^2_{(0,1)}(\Omega,\varphi)$ is a $C^\infty$  $(0,1)$-form satisfying $\overline\partial v=0$ and $\mathrm{supp}\,v\subset U$.
\end{enumerate}
Then there exists a $C^\infty$ function $u$ on $\Omega$ such that $\overline\partial u=v$ and
\[
\int_{\Omega}|u|^2e^{-\varphi}\,dv\le\frac{1}{c}\int_{\Omega}|v|^2e^{-\varphi}\,dv.
\]
\end{proposition}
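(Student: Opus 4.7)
My plan is to adapt the functional-analytic proof of H\"ormander's classical $L^2$-estimate. The crucial observation is that although $\varphi$ is strictly plurisubharmonic only on $U$, the hypothesis $\mathrm{supp}\, v \subset U$ ensures that the duality pairing $(v,f)_\varphi$ only sees $f|_U$, so the strict plurisubharmonicity on $U$ alone suffices to close the estimate.

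I view $\ddbar$ as a closed, densely defined operator $T\colon L^2(\Omega, e^{-\varphi}) \to L^2_{(0,1)}(\Omega, e^{-\varphi})$, and similarly $T_1\colon L^2_{(0,1)} \to L^2_{(0,2)}$ for $\ddbar$ on $(0,1)$-forms. Because $\Omega$ is pseudoconvex, the Kohn--Morrey--H\"ormander identity, applied to $f \in \mathrm{Dom}(T^*) \cap \mathrm{Dom}(T_1)$, yields
\[
\norm{T^*f}_\varphi^2 + \norm{T_1 f}_\varphi^2 \ge \int_\Omega \sum_{j,k} \varphi_{j\bar k}\, f_j \bar f_k \, e^{-\varphi} \, dV.
\]
In the unbounded setting I justify this by exhausting $\Omega$ by relatively compact pseudoconvex subdomains $\Omega_k$ with smooth boundary (using a smooth strictly plurisubharmonic exhaustion function of $\Omega$, which exists by pseudoconvexity), applying the identity on each $\Omega_k$, and passing to the limit via standard density and mollification arguments so that boundary contributions vanish.

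Combining with the strict psh bound $\sum \varphi_{j\bar k} \xi_j \bar \xi_k \ge c|\xi|^2$ on $U$ and Cauchy--Schwarz, using $\mathrm{supp}\, v \subset U$, I obtain
\[
|(v,f)_\varphi|^2 \le \norm{v}_\varphi^2 \int_U |f|^2 e^{-\varphi}\, dV \le \frac{1}{c}\norm{v}_\varphi^2 \bigl(\norm{T^*f}_\varphi^2 + \norm{T_1 f}_\varphi^2\bigr)
\]
for every $f \in \mathrm{Dom}(T^*) \cap \mathrm{Dom}(T_1)$. Since $T_1 v = 0$, H\"ormander's three-space duality (a Hahn--Banach argument respecting both adjoint operators) then produces $u \in L^2(\Omega, e^{-\varphi})$ with $Tu = v$ and $\norm{u}_\varphi^2 \le (1/c) \norm{v}_\varphi^2$, which is precisely the desired integral inequality. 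Finally, interior elliptic regularity of $\ddbar$ upgrades $u$ to a $C^\infty$ function on $\Omega$.

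The primary technical obstacle is justifying the Kohn--Morrey--H\"ormander identity on the unbounded domain $\Omega$: elements of $\mathrm{Dom}(T^*)$ must be approximated by compactly supported smooth forms on $\Omega_k$ so that boundary contributions vanish and the Levi form of $\varphi$ enters with the correct sign, while keeping weighted norms under control as $k\to\infty$. Once this foundational identity is in hand, the localization using $\mathrm{supp}\, v \subset U$ together with the duality step are short and routine.
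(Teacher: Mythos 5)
The paper does not actually prove Proposition \ref{prop1}; it quotes it from H\"ormander \cite{H65} (see also \cite{GHH17} and \cite{Hu}), so the comparison is with the standard argument behind those citations. Your outline is that standard argument, and your key observation is the right one: since $\mathrm{supp}\,v\subset U$, the pairing $(v,f)_\varphi$ only sees $f|_U$, so the Levi-form lower bound $c|\xi|^2$ is needed only on $U$, while plain plurisubharmonicity of $\varphi$ on the rest of $\Omega$ keeps the discarded part of the Kohn--Morrey--H\"ormander term nonnegative; the duality step and interior elliptic regularity then finish the proof exactly as you say.

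Two steps need more care than you give them. First, $\varphi$ is only assumed plurisubharmonic (it may even take the value $-\infty$), so the Hessian $\varphi_{j\overline k}$ you insert into the basic inequality exists only as a current; the hypothesis that $\varphi-c|z|^2$ is plurisubharmonic on $U$ is precisely the substitute for your pointwise bound. The standard repair is to regularize the weight: on each exhausting subdomain work with the smooth decreasing mollifications $\varphi_\varepsilon\ge\varphi$ (for which $\varphi_\varepsilon-c|z|^2$ is still plurisubharmonic on a slightly shrunk $U$), use $\int|v|^2e^{-\varphi_\varepsilon}\le\int|v|^2e^{-\varphi}$, and recover the stated estimate for a weak limit of the solutions by monotone convergence. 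Second, your plan to ``apply the identity on each $\Omega_k$ and pass to the limit'' for a fixed $f\in\mathrm{Dom}(T^*)\cap\mathrm{Dom}(T_1)$ on $\Omega$ does not quite parse: the restriction of such an $f$ to $\Omega_k$ need not satisfy the $\overline\partial$-Neumann boundary condition on $\partial\Omega_k$, so the identity on $\Omega_k$ does not apply to it. The clean standard mechanism is to pass to the limit in the solutions, not in the a priori identity: solve $\overline\partial u_k=v$ on each bounded smooth strongly pseudoconvex exhausting domain $\Omega_k$ with the uniform bound $\tfrac1c\|v\|_\varphi^2$, extract a weak limit (diagonalizing also in the mollification parameter), and apply interior regularity; alternatively, one can prove the estimate on $\Omega$ directly via H\"ormander's cutoff/auxiliary-weight device giving density of compactly supported forms in the graph norm. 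With these repairs your argument is complete and coincides in substance with the cited proof.
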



Recall that $p\in\partial\Omega$ is called a \emph{local peak point} if there exists a neighborhood $U$ of $p$  in $\mathbb{C}^n$ and a continuous function $f_p$ on $\overline{\Omega\cap U}$, holomorphic on $\Omega\cap U$, such that $f_p(p)=1$ and $|f_p(z)|<1$ for every $z\in\overline{\Omega\cap U}\setminus\{p\}$.

\begin{lemma}\label{Le1}
Let $\Omega$ be a  possibly unbounded pseudoconvex domain and let $p\in\partial\Omega$ be a local peak point.  Suppose that for a certain  connected open subset  $U\Subset  {\mathbb C}^n$ with  $p\in U$,  there is a bounded from the above plurisubharmonic function $\varphi:\Omega\to[-\infty,0)$ satisfying
\begin{equation}\label{property 1}
\varphi(z)>-c_0~\text{on}~U\cap\Omega,~\text{and on which}~ \varphi-c|z|^2 ~\text{is strongly plurisubharmonic}
\end{equation}
with some constants $c_0>0$ and $c>0$.  Then, after shrinking $U$ if necessary, one has the localization property: 
\begin{enumerate}
    \item $\lim_{z\to p}\frac{K_{\Omega}(z,\overline z)}{K_{\Omega\cap U}(z,\overline z)}=1$,
    \item $\lim_{z\to p}\frac{g_{\Omega}(z,u)}{g_{\Omega\cap U}(z,u)}=1\quad(\forall u\in\mathbb C^n\setminus\{0\})$, 
    \item $\lim_{z\to p}\frac{\lambda_{\Omega}(z)}{\lambda_{\Omega\cap U}(z)}=1.$
\end{enumerate}
\end{lemma}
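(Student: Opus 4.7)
The plan is to sandwich each ratio between $1$ from above (automatic) and $1 - o(1)$ from below (the substantive content). The upper bound is immediate from restriction: for any $f \in A^2(\Omega)$ one has $\|f|_{\Omega \cap U}\|_{\Omega \cap U} \le \|f\|_\Omega$, and all the vanishing-at-$z$ constraints defining the extremal problems for $K$, $g$, and $\lambda^k$ are preserved under restriction. This yields $K_\Omega \le K_{\Omega \cap U}$ and $\lambda^k_\Omega \le \lambda^k_{\Omega \cap U}$ directly; for the Bergman metric, the same argument gives $g^2_\Omega K_\Omega \le g^2_{\Omega \cap U} K_{\Omega \cap U}$, so that (1) implies the upper bound in (2).

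For the lower bound in (1), let $f \in A^2(\Omega \cap U)$ be the extremal with $\|f\|_{\Omega \cap U} = 1$ and $|f(z)|^2 = K_{\Omega \cap U}(z, \bar z)$. Shrink $U$ and pick $V \Subset U$ containing $p$ so that $M := \sup_{(U \setminus V) \cap \bar\Omega} |f_p| < 1$, where $f_p$ is the local peak function, and fix $\chi \in C_c^\infty(U)$ with $\chi \equiv 1$ on $V$. Set $F := \chi \cdot f_p^N \cdot f$, extended by zero outside $U \cap \Omega$; then $\|F\|_\Omega \le 1$, and $\bar\partial F = (\bar\partial \chi)\, f_p^N\, f$ is supported in $(U \setminus V) \cap \Omega$ where $|f_p|^{2N} \le M^{2N}$. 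Apply Proposition \ref{prop1} with weight $\varphi$: using $\varphi > -c_0$ on the support of $\bar\partial F$, we obtain $u \in C^\infty(\Omega)$ with $\bar\partial u = \bar\partial F$ and $\int_\Omega |u|^2 e^{-\varphi}\, dv \le C M^{2N}$. The crucial point is that $\varphi \le 0$ on $\Omega$ forces $e^{-\varphi} \ge 1$, upgrading this to the unweighted bound $\|u\|_\Omega \le C_1 M^N$. Set $H := F - u \in A^2(\Omega)$. Since $u$ is holomorphic on $V$ (where $\bar\partial F$ vanishes), the sub-mean-value estimate gives $|u(z)| \le C_2 M^N$ uniformly for $z$ in a compact subset of $V$. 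Because $\Omega \cap U$ is bounded and has a local peak point at $p$, $K_{\Omega \cap U}(z, \bar z) \to \infty$ as $z \to p$, and hence
\[
\frac{K_\Omega(z, \bar z)}{K_{\Omega \cap U}(z, \bar z)} \;\ge\; \frac{|H(z)|^2}{\|H\|_\Omega^2\, K_{\Omega \cap U}(z, \bar z)} \;\ge\; \frac{\bigl(|f_p(z)|^N - C_2 M^N K_{\Omega \cap U}(z,\bar z)^{-1/2}\bigr)^2}{(1 + C_1 M^N)^2};
\]
letting $z \to p$ and then $N \to \infty$ produces the matching lower bound.

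The arguments for (2) and (3) follow the same template, with $f$ now taken to be the extremal realizing $g^2_{\Omega \cap U}(z, u)\, K_{\Omega \cap U}(z, \bar z)$ (respectively $\lambda^k_{\Omega \cap U}(z)$), so that $f(z) = 0$ together with the appropriate lower order derivatives. The candidate $H = F - u$ satisfies the vanishing conditions at $z$ only up to errors of size $O(M^N)$, since $u$ and all its derivatives at $z$ are controlled by Cauchy estimates on the holomorphic restriction $u|_V$. To produce an admissible competitor, we subtract from $H$ a linear combination of the Bergman reproducing kernel $K_\Omega(\cdot, \bar z)$ and its $\bar z$-derivatives up to order $k-1$, with coefficients chosen to cancel the unwanted Taylor coefficients of $H$ at $z$. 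This correction introduces an $O(M^N)$ perturbation in the $A^2(\Omega)$-norm while shifting the target functional by a similarly small amount, whereas the dominant quantity $f_p(z)^N\, \partial_k f(z)$ or $f_p(z)^N \sum u_j\, \partial_j f(z)$ has size comparable to $\sqrt{\lambda^k_{\Omega \cap U}(z)}$ or $\sqrt{g^2_{\Omega \cap U}(z,u)\, K_{\Omega \cap U}(z,\bar z)}$, both blowing up at $p$; so the relative error is $o(1)$. The principal obstacle in the unbounded setting is exactly that the classical device of a singular weight $\varphi + 2n \log|\zeta - z|$, which would force $u(z) = 0$ exactly, fails to keep $u$ in $L^2(\Omega)$; the resolution above is to use only the bounded-above weight $\varphi$ and to recover the vanishing of $u$ and its derivatives at $z$ approximately via mean-value/Cauchy estimates, absorbing the deficit through the Bergman-kernel projection and the blow-up of $K_\Omega$ at $p$.
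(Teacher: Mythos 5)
There is a genuine gap, and it sits exactly at the step you flag as the ``resolution'' of the unbounded case. Your lower bound hinges on the claim that the $\bar\partial$-correction $u$ satisfies $|u(z)|\le C_2 M^N$ with $C_2$ uniform as $z\to p$, obtained from a sub-mean-value estimate for the holomorphic function $u|_{V\cap\Omega}$. But $u$ is holomorphic only on $V\cap\Omega$, not on $V$, and the points $z$ in question tend to the boundary point $p$; the mean-value/Cauchy estimates then carry a factor $\mathrm{dist}(z,\partial\Omega)^{-n}$ (worse, $\mathrm{dist}^{-n-1}$, for the first derivatives you need in (2) and (3)). Dividing by $K_{\Omega\cap U}(z,\overline z)^{1/2}$ does not save you: at a strongly pseudoconvex peak point the kernel grows only like $\mathrm{dist}^{-(n+1)}$, so the error term $C_2(z)M^N K_{\Omega\cap U}(z,\overline z)^{-1/2}\gtrsim \mathrm{dist}(z,\partial\Omega)^{-(n-1)/2}M^N$ blows up as $z\to p$ for each fixed $N$, and your displayed inequality becomes vacuous; the same unjustified uniformity underlies the ``$O(M^N)$ perturbation'' claims in the Bergman-kernel correction you propose for (2) and (3), whose coefficients are again controlled only by $|u(z)|$ and $|\nabla u(z)|$.

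The premise that the singular weight must be abandoned in the unbounded setting is also where you diverge from what actually works. The paper keeps the log singularity but localizes it: it solves $\bar\partial$ with the weight $\Phi(z)=m\varphi(z)+(2n+4)\chi_1(z)\log|z-\xi|$, where $\chi_1\in C_0^\infty(V_1)$ equals $1$ near $\xi$ and $V_1$ is small enough that $|z-\xi|<1$ on its support. Then $\Phi\le 0$ on all of $\Omega$ (so the weighted estimate dominates the unweighted $L^2$ norm, exactly the point you were worried about), and for $m$ large the strong plurisubharmonicity of $\varphi$ on $U$ absorbs the non-plurisubharmonic contribution from the cutoff region, so Proposition \ref{prop1} applies. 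The non-integrability of $e^{-\Phi}$ at $\xi$ (the exponent $2n+4$) forces the exact vanishing $g(\xi)=0$ and $\nabla g(\xi)=0$, so $F_\ell=\chi f h^\ell-g$ inherits the extremal side conditions on the nose and the error is genuinely $O(b^\ell)$ uniformly in $\xi$ near $p$; no interior elliptic estimate, kernel projection, or blow-up of $K$ is needed. To repair your argument you would have to either adopt this localized singular weight or find some other mechanism producing exact (or boundary-distance-uniform) vanishing of $u$ and its relevant derivatives at $z$; as written, the approximate-vanishing route does not close.
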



\begin{remark}\label{remark 1}
 Nikolov  in  \cite[Lemma~1]{Ni02} constructed   a plurisubharmonic function \(\varphi\)  on $\Omega$ with the property as in ~(\ref{property 1}).  See also Lemma~\ref{le3-28-1} later in this paper when $p$ is a strongly pseudoconvex polyhedral boundary point .    Nikolov   established in \cite[Lemma~1]{Ni02}  the localization formulas (1) and (2).  With the help of  Proposition \ref{prop1}, (3)  can  be obtained following    more or less the same  argument as in \cite{KYu}.   For the reader’s convenience, we provide a  detailed proof  of this localization which follows. We also mention that a consequence of Lemma \ref{Le1} is that when $U$ is sufficiently small, $U\cap \Omega\subset \Omega^*$, namely, the Bergman metric of  $\Omega$ is well defined in  $U\cap \Omega$.   (This is  proved directly with the H\"ormander $L^2$-estimates in  Proposition \ref{prop1} in \cite{James}.)
\end{remark}

\begin{proof}
    Let $h$ be a local peak function for $p$. After shrinking $U$ we may assume that
\begin{equation}\label{10-20-a1}
h(p)=1,~|h(z)|_{\overline{U\cap{{\Omega}}}\setminus \{p\}}<1.
\end{equation}
Pick small neighborhoods $V_2\Subset V_1\Subset U$ of $p$. Then there exists a constant $0<b<1$ such that $|h|\leq b$ on $\overline{(U\setminus V_2)\cap\Omega}$.  Fix a cut-off function $\chi\in C_0^\infty (U)$ with $\chi\equiv 1$ on $V_1$ and $0\leq\chi\leq 1$ on $U$. 
    
Given any function $f\in A^2(U\cap \Omega)$, for each integer $l\geq 1$, set $\alpha=\overline\partial(\chi f h^l)$. Then $\alpha$ is a smooth $\overline\partial$-closed $(0, 1)$-form on $\Omega$ with $${\rm supp}~\alpha\subset(U\setminus V_1)\cap\Omega.$$ Choose $\chi_1\in C_0^\infty(V_1)$ that satisfies $\chi_1\equiv 1$ on $\overline V_2$ and $0\leq \chi_1\leq 1$ on $V_1$.  Assume that $V_1$ is sufficiently small such that $$|z-\xi|<1, \forall z\in V_1, \xi\in V_2.$$ 

Choose an integer \(m>0\) sufficiently large so that for $\xi\in V_2$ the function
\[
\Phi(z)=m\varphi(z)+(2n+4)\chi_1(z)\log|z-\xi|
\]
is plurisubharmonic and non-positive on \(\Omega\), and in addition \(\Phi(z)-c|z|^2\) is plurisubharmonic on \(U\cap\Omega\).
Applying Proposition~\ref{prop1}, we obtain a smooth solution~$g$ of the equation $\overline\partial g=\alpha$ on~$\Omega$ satisfying

\begin{equation}\label{9-19-a1}
\int_{\Omega}|g|^{2}\,e^{-\Phi}\,dv
\le\frac{1}{c}\int_{\Omega}|\alpha|^{2}\,e^{-\Phi}\,dv.
\end{equation}
Since $\varphi>-c_{0}$ on~$U\cap\Omega$, the right-hand side is dominated by

\[
\frac{c_{1}}{c}\int_{\Omega\cap(U\setminus V_{1})}|h|^{l}|f|^{2}\,dv
\le\frac{c_{1}}{c}\,b^{l}\|f\|_{\Omega\cap U}
\]
for some constant $c_{1}>0$.  
Moreover, \eqref{9-19-a1} forces the following vanishing property: 

\begin{equation}\label{e1-3-27}
g(\xi)=0
\quad\text{and}\quad
\frac{\partial g}{\partial z_{j}}(\xi)=0
\quad\text{for }1\le j\le n.
\end{equation}
On the other hand, $\Phi<0$ on $\Omega$, so
\[
\int_{\Omega}|g|^{2}\,dv
\le\int_{\Omega}|g|^{2}\,e^{-\Phi}\,dv
\le\frac{c_{1}}{c}\,b^{l}\|f\|_{\Omega\cap U}
=:c_{2}\,b^{l}\|f\|_{\Omega\cap U}.
\]
Now set $F_\ell=\chi f h^\ell-g$ for $\ell\geq 1$. Then $F_\ell\in A^2(\Omega)$ and $F_\ell(\xi)=h^\ell(\xi)f(\xi)$. Moreover,
$$\|F_\ell\|_{\Omega}\leq (1+c_2b^\ell)\|f\|_{\Omega\cap U}.$$


For  $1\leq k\leq n$, let $f$ be an extremal function for $\lambda_{\Omega\cap U}^k(\xi)$ with $\xi\in V_2$.  Thus
    $$f\in A^2(\Omega\cap U),~\|f\|_{\Omega\cap U}=1,~f(\xi)=0,~\frac{\partial f}{\partial z_j}(\xi)=0, ~1\leq j<k$$
    and $$\lambda_{\Omega\cap U}^k(\xi)=\left|\frac{\partial f}{\partial z_k}(\xi)\right|.$$
  By (\ref{e1-3-27}), the function $F_\ell$ at $\xi$ satisfies
  \begin{equation}
      F_\ell(\xi)=0,~\frac{\partial F_\ell}{\partial z_j}(\xi)=0, 1\leq j<k.
  \end{equation}
    Consequently, 
    \begin{equation}
        \lambda_{\Omega}^k(\xi)\geq \frac{|\frac{\partial F_\ell}{\partial z_k}(\xi)|}{\|F_\ell\|}=\frac{|\frac{\partial f}{\partial z_k}(\xi)||h(\xi)|^\ell}{\|F_\ell\|}\geq \lambda^k_{\Omega\cap U}(\xi)\frac{|h(\xi)|^{\ell}}{(1+c_2 b^\ell)}.
    \end{equation}
    First let $\xi\rightarrow p$ and then $\ell\rightarrow\infty$; since $|h(p)|=1$, we obtain $$\liminf _{\xi\rightarrow p}\frac{\lambda_{\Omega}^k(\xi)}{\lambda^k_{\Omega\cap U}(\xi)}\geq 1.$$
   On the other hand, the monotonicity of $\lambda^k_{\Omega}$ gives $\lambda_{\Omega}^k(\xi)\leq \lambda_{\Omega\cap U}^k(\xi)$, hence $$\limsup_{\xi\rightarrow p}\frac{\lambda_{\Omega}^k(\xi)}{\lambda_{\Omega\cap U}^k(\xi)}\leq 1.$$
Therefore the third localization property holds:
\[
\lim_{\xi\to p}\frac{\lambda^{k}_{\Omega}(\xi)}{\lambda^{k}_{\Omega\cap U}(\xi)}=1.
\]
\end{proof} 
By virtue of the localization of the Bergman kernel $K_{\Omega}(z,\overline z)$ and the quantity $\lambda_{\Omega}(z)$, the localization of the Bergman canonical invariant $J_{\Omega}$ follows immediately from Proposition~\ref{Prop 1-3-31}.
\begin{corollary}\label{coro1-3-31}
    Under the same assumptions as in Lemma \ref{Le1},
    $$\lim_{z\rightarrow p}\frac{J_{\Omega}(z)}{J_{\Omega\cap { U}}(z)}=1.$$
    \end{corollary}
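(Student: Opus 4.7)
The plan is to observe that the statement is essentially a tautological consequence of Proposition~\ref{Prop 1-3-31} combined with parts (1) and (3) of Lemma~\ref{Le1}, so the proof should be very short. First, I would invoke Proposition~\ref{Prop 1-3-31}(1) to write
\[
\frac{J_{\Omega}(z)}{J_{\Omega\cap U}(z)}
=\frac{\lambda_{\Omega}(z)/K_{\Omega}(z,\overline z)^{n+1}}{\lambda_{\Omega\cap U}(z)/K_{\Omega\cap U}(z,\overline z)^{n+1}}
=\frac{\lambda_{\Omega}(z)}{\lambda_{\Omega\cap U}(z)}\cdot\Bigl(\frac{K_{\Omega\cap U}(z,\overline z)}{K_{\Omega}(z,\overline z)}\Bigr)^{n+1}.
\]
This identity requires that both ratios on the right are well-defined near $p$, which, as noted in Remark~\ref{remark 1}, is guaranteed (after shrinking $U$ if necessary) by the fact that $U\cap\Omega\subset\Omega^{*}$, so that both $K_{\Omega}(z,\overline z)$ and $K_{\Omega\cap U}(z,\overline z)$ are positive and the relevant $\lambda^{k}$'s are strictly positive for $z$ sufficiently close to $p$.

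Next I would simply pass to the limit $z\to p$. By Lemma~\ref{Le1}(1),
\[
\lim_{z\to p}\frac{K_{\Omega\cap U}(z,\overline z)}{K_{\Omega}(z,\overline z)}=1,
\]
and by applying Lemma~\ref{Le1}(3) (noting that $\lambda_{\Omega}=\lambda^{1}_{\Omega}\cdots\lambda^{n}_{\Omega}$ and likewise for $\Omega\cap U$, and that the localization was actually established for each factor $\lambda^{k}$ in the proof of Lemma~\ref{Le1}),
\[
\lim_{z\to p}\frac{\lambda_{\Omega}(z)}{\lambda_{\Omega\cap U}(z)}
=\prod_{k=1}^{n}\lim_{z\to p}\frac{\lambda^{k}_{\Omega}(z)}{\lambda^{k}_{\Omega\cap U}(z)}=1.
\]
Substituting these two limits into the displayed identity yields
$\lim_{z\to p}J_{\Omega}(z)/J_{\Omega\cap U}(z)=1$, which completes the proof. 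There is no real obstacle here; the only thing to be mildly careful about is ensuring that the denominators are nonzero along the approach to $p$, which is precisely what Remark~\ref{remark 1} records.
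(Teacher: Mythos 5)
Your argument is correct and is exactly the paper's proof: the corollary is deduced by writing $J_{\Omega}=\lambda_{\Omega}/K_{\Omega}^{n+1}$ via Proposition~\ref{Prop 1-3-31}(1) and combining the localization statements (1) and (3) of Lemma~\ref{Le1}. The additional care about well-definedness of the ratios near $p$ is consistent with Remark~\ref{remark 1} and does not change the substance.
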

With Proposition \ref{prop1} in hand, the localization of $I_{\Omega}$ for an unbounded domain then follows  from the same argument of \cite[Proposition 2.4]{KYu}. We state  the result in the following lemma, omitting the details of the proof which can be found in \cite{James}.
    \begin{lemma}\label{10-20-lem1}
           Under the same assumptions as in Lemma \ref{Le1}, $$\lim_{z\rightarrow p}\frac{I_{\Omega}(z, u)}{I_{\Omega\cap U}(z, u)}=1, \forall u\in\mathbb C^n\setminus\{0\}.$$
    \end{lemma}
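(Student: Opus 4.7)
The plan is to adapt the $\overline\partial$-correction construction used in the proof of Lemma \ref{Le1}, extracting one order of additional vanishing from the same weight $\Phi(z) = m\varphi(z) + (2n+4)\chi_1(z)\log|z-\xi|$. Because $\alpha := \overline\partial(\chi f h^\ell)$ vanishes on a neighborhood of $\xi$, the solution $g$ of $\overline\partial g = \alpha$ produced by Proposition \ref{prop1} is holomorphic near $\xi$, and the bound $\int_\Omega |g|^2 e^{-\Phi}\,dv < \infty$ then forces $g = O(|z-\xi|^3)$ by a polar-coordinate calculation. In particular, beyond the vanishing $g(\xi)=0$ and $\partial g/\partial z_j(\xi)=0$ that was used in Lemma \ref{Le1}, all second holomorphic partials $\partial^2 g/\partial z_i \partial z_j(\xi)$ also vanish. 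This third-order vanishing is exactly what is needed to carry $f''(\xi)$ through the correction unchanged.

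For the lower bound $\liminf\geq 1$, fix $\xi\in V_2$ and let $f\in A^2(\Omega\cap U)$ be a near-extremal competitor for $I_{\Omega\cap U}(\xi,u)$ with $\|f\|_{\Omega\cap U}=1$ and $f(\xi)=f'(\xi)=0$. Applying Proposition \ref{prop1} with the weight $\Phi$ yields $g$ with $\|g\|_\Omega \leq c_2 b^\ell$ and the vanishing described above. Setting $F_\ell := \chi f h^\ell - g$, one has $F_\ell\in A^2(\Omega)$, $\|F_\ell\|_\Omega \leq 1 + c_2 b^\ell$, and since $\chi \equiv 1$ near $\xi$ together with $f(\xi)=f'(\xi)=0$, the identities $F_\ell(\xi)=F_\ell'(\xi)=0$ and $F_\ell''(\xi) = h^\ell(\xi) f''(\xi)$ hold. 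Using $F_\ell/\|F_\ell\|_\Omega$ as a competitor in the supremum defining $I_\Omega(\xi,u)$ gives
\[
I_\Omega(\xi,u) \;\geq\; \frac{|h(\xi)|^{2\ell}}{(1+c_2 b^\ell)^2}\, u\, f''(\xi)\,[\overline G_\Omega]^{-1}(\xi)\,\overline{f''(\xi)}\, u^*.
\]
The matrix version of Lemma \ref{Le1}(2) --- namely $[\overline G_\Omega]^{-1}(\xi) = (1+o(1))[\overline G_{\Omega\cap U}]^{-1}(\xi)$ as $\xi\to p$ --- then turns the right-hand side into $\frac{|h(\xi)|^{2\ell}}{(1+c_2 b^\ell)^2}(1+o(1))\,I_{\Omega\cap U}(\xi,u)$. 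Sending $\xi\to p$ so that $|h(\xi)|\to 1$, and then $\ell\to\infty$, gives the desired lower bound.

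The reverse inequality $\limsup\leq 1$ follows by restriction. If $f\in A^2(\Omega)$ competes for $I_\Omega(\xi,u)$, then $f|_{\Omega\cap U}\in A^2(\Omega\cap U)$ satisfies $\|f|_{\Omega\cap U}\|_{\Omega\cap U}\leq\|f\|_\Omega=1$ with the same values $f(\xi)=f'(\xi)=0$ and the same $f''(\xi)$. Normalizing and invoking the matrix localization once more yields $u\, f''(\xi)[\overline G_\Omega]^{-1}(\xi)\overline{f''(\xi)}u^* \leq (1+o(1))\,I_{\Omega\cap U}(\xi,u)$, and taking the supremum over $f$ gives $I_\Omega(\xi,u) \leq (1+o(1))\,I_{\Omega\cap U}(\xi,u)$.

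The main technical point I expect to address is upgrading the scalar metric comparison of Lemma \ref{Le1}(2) to the matrix statement above in a form strong enough to control the bilinear expression $v\mapsto v\,[\overline G_\Omega]^{-1}(\xi)\,v^*$ at the vector $v = u\,f''(\xi)$, which is not aligned with $u$. This can be obtained by polarizing the pointwise comparison over a basis of $\mathbb C^n$ --- for instance writing $G_\Omega(\xi) = G_{\Omega\cap U}(\xi)^{1/2}(I+E_\xi)\,G_{\Omega\cap U}(\xi)^{1/2}$ with $\|E_\xi\|\to 0$ --- with the only subtlety being uniformity of the $o(1)$ errors as $\xi\to p$ and independence from the competitor $f$.
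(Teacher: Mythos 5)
The paper itself does not write this proof out (it defers to the argument of \cite[Proposition 2.4]{KYu} and to \cite{James}), and your construction is exactly that intended argument: reusing the weight $\Phi=m\varphi+(2n+4)\chi_1\log|z-\xi|$, noting that $g$ is holomorphic near $\xi$ (since $\mathrm{supp}\,\alpha\subset(U\setminus V_1)\cap\Omega$) and that the exponent $2n+4$ forces vanishing of order $3$ at $\xi$ (with order $k$ the local integral behaves like $\int_0 r^{2k-5}dr$), so that $F_\ell=\chi fh^\ell-g$ satisfies $F_\ell(\xi)=F_\ell'(\xi)=0$, $F_\ell''(\xi)=h^\ell(\xi)f''(\xi)$, $\|F_\ell\|_\Omega\le 1+c_2b^\ell$; the two-sided comparison and the order of limits $\xi\to p$, then $\ell\to\infty$, are as in Lemma \ref{Le1}. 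Up to one point, this is correct and is essentially the paper's (referenced) proof.

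The one genuine soft spot is your justification of the matrix comparison $G_\Omega(\xi)=G_{\Omega\cap U}(\xi)^{1/2}(I+E_\xi)G_{\Omega\cap U}(\xi)^{1/2}$ with $\|E_\xi\|\to0$: this does \emph{not} follow by ``polarizing the pointwise comparison over a basis.'' Knowing $uG_\Omega(\xi)\overline u^t=(1+o_u(1))\,uG_{\Omega\cap U}(\xi)\overline u^t$ for each fixed $u$ is insufficient when $G_{\Omega\cap U}(\xi)$ is ill-conditioned, which is precisely the situation as $\xi\to p$ (the normal eigenvalue blows up faster than the tangential ones). For instance, with $G_2=\begin{pmatrix}1&0\\0&t\end{pmatrix}$ and $G_1=\begin{pmatrix}1&\epsilon\sqrt t\\ \epsilon\sqrt t&t\end{pmatrix}$, the ratio of the two quadratic forms tends to $1$ as $t\to0$ for every fixed direction, yet $G_2^{-1/2}G_1G_2^{-1/2}=\begin{pmatrix}1&\epsilon\\ \epsilon&1\end{pmatrix}$ never approaches $I$. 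What you actually need, and what is available, is uniformity in the direction $u$ rather than polarization: the same $F_\ell$-construction applied to an extremal function for $g^2_{\Omega\cap U}(\xi,u)K_{\Omega\cap U}(\xi)$, together with the trivial restriction inequality, gives
$\frac{|h(\xi)|^{2\ell}}{(1+c_2b^\ell)^2}\,g^2_{\Omega\cap U}(\xi,u)K_{\Omega\cap U}(\xi)\le g^2_\Omega(\xi,u)K_\Omega(\xi)\le g^2_{\Omega\cap U}(\xi,u)K_{\Omega\cap U}(\xi)$
with $c_2$, $b$, $h$ independent of $u$; combined with Lemma \ref{Le1}(1) this yields convergence of the ratio of quadratic forms uniformly over $u\neq0$, which (by the Rayleigh-quotient characterization of the norm of the Hermitian matrix $E_\xi$) is equivalent to $\|E_\xi\|\to0$, hence $\|(I+E_\xi)^{-1}-I\|\to0$, which is what your bilinear estimates at the vector $v=u\,f''(\xi)$ use. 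With this replacement the proof is complete, and the resulting $o(1)$ is automatically independent of the competitor $f$, as you require in the upper bound.
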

When $p$ is a $C^{2}$-smooth strongly pseudoconvex boundary point, the localization of $J_{\Omega}$ and $I_{\Omega}$, together with the results in \cite[Corollary 2]{KYu}, yields the following.

\begin{corollary}\label{coro1-3-30}
Let $\Omega$ be a pseudoconvex domain (possibly unbounded) in $\mathbb C^n$ and let $p\in\partial\Omega$ be a $C^{2}$ strongly pseudoconvex point. Then
\[
\lim_{z\to p}J_{\Omega}(z)=\frac{(n+1)^n\pi^n}{n!}
\quad\text{and}\quad
\lim_{z\to p}R_{\Omega}(z,u)=-1
\quad\text{for every }u\in\mathbb C^n\setminus\{0\}.
\]
\end{corollary}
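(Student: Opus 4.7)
The plan is to localize at the smooth strongly pseudoconvex point $p$ and invoke the classical boundary asymptotics of the Bergman kernel at such a point.

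First I would verify that the hypotheses of Lemma \ref{Le1} are met at a $C^{2}$ strongly pseudoconvex boundary point. Since $\partial\Omega$ is $C^{2}$ and strongly pseudoconvex at $p$, the Levi polynomial of a local defining function supplies a local holomorphic support function and, hence, a local peak function at $p$. The bounded plurisubharmonic function $\varphi$ on $\Omega$ with $\varphi - c|z|^{2}$ strongly plurisubharmonic on $U\cap\Omega$, as demanded in \eqref{property 1}, is furnished by Nikolov's construction quoted in Remark \ref{remark 1}. Consequently, Lemma \ref{Le1}, Corollary \ref{coro1-3-31}, and Lemma \ref{10-20-lem1} all apply simultaneously, giving
$$\lim_{z\to p}\frac{K_{\Omega}(z,\bar z)}{K_{\Omega\cap U}(z,\bar z)}=1,\quad\lim_{z\to p}\frac{g_{\Omega}(z,u)}{g_{\Omega\cap U}(z,u)}=1,\quad\lim_{z\to p}\frac{J_{\Omega}(z)}{J_{\Omega\cap U}(z)}=1,\quad\lim_{z\to p}\frac{I_{\Omega}(z,u)}{I_{\Omega\cap U}(z,u)}=1.$$

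Next, I would take $U$ to be a small open ball around $p$, so that $\Omega\cap U$ is a bounded pseudoconvex domain whose boundary is smooth and strongly pseudoconvex in a neighborhood of $p$. For such a domain, the Fefferman-type boundary asymptotics of the Bergman kernel match those of the unit ball, and \cite[Corollary~2]{KYu} directly yields
$$\lim_{z\to p}J_{\Omega\cap U}(z)=\frac{(n+1)^{n}\pi^{n}}{n!},\qquad \lim_{z\to p}R_{\Omega\cap U}(z,u)=-1$$
for every $u\in\mathbb{C}^{n}\setminus\{0\}$, these being precisely the (constant) values of $J_{B^{n}}$ and $R_{B^{n}}$ on the unit ball.

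The first conclusion for $\Omega$ now follows at once from the $J$-localization. For the Ricci curvature statement, I would invoke Proposition \ref{Prop 1-3-31}(2), which expresses
$$R_{\Omega}(z,u)=(n+1)-\frac{I_{\Omega}(z,u)}{g^{2}_{\Omega}(z,u)\,K_{\Omega}(z,\bar z)}.$$
Since each of the three quantities $I_{\Omega}$, $g^{2}_{\Omega}$, $K_{\Omega}$ is asymptotic to its $\Omega\cap U$-counterpart as $z\to p$, the limit of $R_{\Omega}(z,u)$ at $p$ agrees with that of $R_{\Omega\cap U}(z,u)$, namely $-1$. The only step requiring genuine attention is the verification of the hypotheses of Lemma \ref{Le1} (the peak function and the global plurisubharmonic $\varphi$); once those are in place the remainder of the argument is a direct combination of the localization results with the known ball-asymptotics.
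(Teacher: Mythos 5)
Your proposal is correct and follows essentially the same route as the paper: verify the hypotheses of Lemma \ref{Le1} at the strongly pseudoconvex point (peak function plus Nikolov-type $\varphi$), invoke \cite[Corollary 2]{KYu} for the bounded piece $\Omega\cap U$, and transfer the limits to $\Omega$ via Corollary \ref{coro1-3-31} for $J$ and via Proposition \ref{Prop 1-3-31}(2) combined with the localization of $K_\Omega$, $g_\Omega$, and $I_\Omega$ (Lemma \ref{Le1} and Lemma \ref{10-20-lem1}) for the Ricci curvature. The only difference is cosmetic: you spell out the peak function via the Levi polynomial, which the paper simply asserts.
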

\begin{proof}
Let $p\in\partial\Omega$ be a strongly pseudoconvex boundary point. Then it is a local peak point. By Remark \ref{remark 1}, there exists a bounded from above plurisubharmonic function $\varphi: \Omega\rightarrow(-\infty, 0)$ and a suitable neighborhood $U$ of $p$ such that the assumptions in  Lemma \ref{Le1} are satisfied.  Since $U\cap\Omega$ is a bounded pseudocovnex domain, it then follows from \cite[Corollary 2]{KYu} that $$\lim_{z\rightarrow p}J_{\Omega\cap U}(z)=\frac{(n+1)^n\pi^n}{n!},\quad\text{and}\quad  \lim_{z\rightarrow p}R_{\Omega\cap U}(z, u)=-1, \forall u\in\mathbb C^n\setminus\{0\}.$$ The first conclusion of the corollary then follows directly from Corollary \ref{coro1-3-31}. For the second part, fix $u\in\mathbb C^n\setminus\{0\}$. 
Since \begin{equation*}
    \begin{split}
        -R_{\Omega}(z, u)+(n+1)&=\frac{I_{\Omega(z,~ u)}}{g^2_{\Omega}(z,u)K_{\Omega}(z,\overline z)}\\
        &=\frac{I_{\Omega(z,~ u)}}{g^2_{\Omega}(z,u)K_{\Omega}(z,\overline z)}\cdot\frac{g^2_{\Omega\cap U}(z, u)K_{\Omega\cap U}(z, z)}{I_{\Omega\cap U}(z, u)}\cdot \frac{I_{\Omega\cap U(z,~ u)}}{g^2_{\Omega\cap U}(z,u)K_{\Omega\cap U}(z,\overline z)}\\
        &=\frac{I_{\Omega(z,~ u)}}{g^2_{\Omega}(z,u)K_{\Omega}(z,\overline z)}\cdot\frac{g^2_{\Omega\cap U}(z, u)K_{\Omega\cap U}(z, z)}{I_{\Omega\cap U}(z, u)}\cdot[-R_{\Omega\cap U}(z, u)+(n+1)]
    \end{split}
\end{equation*}
By Proposition \ref{Prop 1-3-31} (2), Lemma \ref{Le1} and Lemma \ref{10-20-lem1}, we have
\begin{equation}
\lim_{z\rightarrow p}[-R_{\Omega}(z, u)+(n+1)]=n+2
\end{equation}
and thus, $\lim_{z\rightarrow p}R_{\Omega}(z, u)=-1.$
This completes the proof.
\end{proof}

\begin{proposition}\label{prop1-4-3}
Let $\Omega$ be a possibly unbounded pseudoconvex domain in $\mathbb C^n$  which is stronlgy pseudoconvex polyhedral at some boundary point $p\in\partial\Omega$. Let $U$ be a neighborhood of $p$ such that $U\cap\Omega$ is connected, and on which the Bergman metric $g_{\Omega}$ is well defined. Then its Bergman metric $g_\Omega$ is K\"ahler-Einstein on $U\cap\Omega$  if and only if its Bergman canonical invariant $J_{\Omega}\equiv (n+1)^n\frac{\pi^n}{n!}$ on $U\cap\Omega$.    
\end{proposition}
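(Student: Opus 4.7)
The plan is to recast the K\"ahler--Einstein condition as a pluriharmonic equation for $\log J_\Omega$ and then to force $\log J_\Omega$ to be a constant using the $C^2$ smooth strongly pseudoconvex faces of $\partial\Omega$ near $p$.

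The backward direction is immediate: if $J_\Omega\equiv c_0:=(n+1)^n\pi^n/n!$ on $U\cap\Omega$, then $\log\det G_\Omega-\log K_\Omega\equiv\log c_0$, so applying $\partial\overline\partial$ yields
\[
R_{\alpha\overline\beta}=-\frac{\partial^2\log\det G_\Omega}{\partial z_\alpha\partial\overline z_\beta}=-\frac{\partial^2\log K_\Omega}{\partial z_\alpha\partial\overline z_\beta}=-g_{\alpha\overline\beta},
\]
so $g_\Omega$ is K\"ahler--Einstein with Einstein constant $-1$.

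For the forward direction, assume $R_{\alpha\overline\beta}=c\,g_{\alpha\overline\beta}$ on $U\cap\Omega$ for some constant $c$; equivalently $R_\Omega(z,u)\equiv c$ on $U\cap\Omega$. The linear independence of $\{\partial\rho_j|_p\}$ forces each face $F_j=\{\rho_j=0\}\cap\{\rho_i<0:i\ne j\}\cap U$ to be a nonempty $C^2$ smooth strongly pseudoconvex piece of $\partial\Omega$; applying Corollary~\ref{coro1-3-30} at any $q\in F_j$ gives $R_\Omega(z,u)\to-1$ as $z\to q$, and hence $c=-1$. The Einstein equation then rearranges to $\partial\overline\partial(\log\det G_\Omega-\log K_\Omega)=0$, i.e.\ $\log J_\Omega$ is pluriharmonic on $U\cap\Omega$.

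It remains to upgrade pluriharmonicity to constancy. Set $h:=\log J_\Omega-\log c_0$; by Corollary~\ref{coro1-3-30}, $h$ extends continuously to zero on the relative interior of each $F_j$. I will show $h\equiv 0$ in a one-sided neighborhood of some $q$ in the relative interior of some $F_j$, after which real-analyticity of the pluriharmonic $h$ on the connected $U\cap\Omega$ propagates $h\equiv 0$ globally. Choose a small ball $V\ni q$ meeting $\partial\Omega$ only in the smooth strongly pseudoconvex hypersurface $V\cap F_j$. Since $V\cap\Omega=V\cap\{\rho_j<0\}$ is Stein, $h$ admits a global primitive $h=\mathrm{Re}\,F$ with $F$ holomorphic on $V\cap\Omega$. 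The boundary values of $F$ on $V\cap F_j$ then lie in $i\mathbb R$; writing them as $iG$ with $G$ real and CR, the equations $\overline\partial_b G=0$, the conjugate relation $\partial_b G=0$ obtained from reality of $G$, and the nondegenerate Levi bracket on a strongly pseudoconvex hypersurface (so $[L,\overline L]$ has nontrivial transverse component) force $TG=0$, whence $dG=0$ along $V\cap F_j$ and $G\equiv c_1$ locally. A Pinchuk-type reflection (or a local Bochner--Hartogs extension of the CR function $F-ic_1$ which vanishes on $F_j$) extends $F-ic_1$ holomorphically across $F_j$ with vanishing values on $F_j$; since $F_j$ is a generic real hypersurface, this forces $F\equiv ic_1$ near $q$, and hence $h\equiv 0$ near $q$.

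The main obstacle is this last rigidity step: passing from the continuous vanishing of the pluriharmonic $h$ on the smooth strongly pseudoconvex face into vanishing in a one-sided neighborhood. The CR-commutator argument for real CR functions on strongly pseudoconvex hypersurfaces is transparent at smooth regularity and goes through at $C^2$ regularity with care; the reflection of the holomorphic primitive $F$ across $F_j$ is standard at this regularity, and the genericness of $F_j$ prevents any nontrivial holomorphic function from vanishing on it. Once this boundary rigidity is granted, the remainder of the proof is routine bookkeeping.
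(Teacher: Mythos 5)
Your backward direction and the reduction of the forward direction to ``$c=-1$ and $\log J_\Omega$ pluriharmonic on $U\cap\Omega$'' (via Corollary \ref{coro1-3-30}) coincide with the paper. Where you diverge is the key rigidity step, upgrading pluriharmonicity to constancy. The paper does this with an elementary device: for any analytic disk $\phi:\Delta\to\Omega$, continuous up to $\overline\Delta$, with $\phi(\partial\Delta)$ in the smooth strongly pseudoconvex part of $U\cap\partial\Omega$, the function $\log J_\Omega\circ\phi$ is harmonic on $\Delta$, continuous on $\overline\Delta$, and equal to $\log\frac{(n+1)^n\pi^n}{n!}$ on $\partial\Delta$ by Corollary \ref{coro1-3-30}; the maximum principle makes it constant on the disk, such disks fill an open subset of $\Omega$ near a strongly pseudoconvex boundary point, and real analyticity of $\log J_\Omega$ together with connectedness of $U\cap\Omega$ finishes. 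Your route, via a global holomorphic primitive $F$ with $h=\mathrm{Re}\,F$ and CR analysis of its boundary values, is genuinely different, and it has a gap.

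The gap is the sentence ``the boundary values of $F$ on $V\cap F_j$ then lie in $i\mathbb R$.'' Nothing in your setup guarantees that $F$ has boundary values on $F_j$ at all: only $\mathrm{Re}\,F=h$ is known to extend continuously (to $0$), and a pluriharmonic conjugate of a boundary-continuous function need not be bounded, let alone continuous, up to the boundary (this already fails for harmonic conjugates in one variable). Every subsequent step — the real CR function $G$, the commutator argument giving $TG=0$, the reflection or extension of $F-ic_1$ — takes these boundary values as its starting point, so the argument does not get off the ground as written; your closing paragraph locates the difficulty in the CR-commutator and reflection at $C^2$ regularity, which is not where the real problem lies. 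The route can be repaired, but only with heavier machinery: e.g. observe $|e^{F}|=e^{h}\to 1$, so $e^{F}$ and $e^{-F}$ are bounded near the face, invoke Fatou-type admissible limits to get an a.e.\ unimodular CR boundary value whose conjugate is also CR (hence constant by your commutator argument), and then apply a boundary uniqueness theorem for bounded holomorphic functions to conclude $e^F$ is constant and $h\equiv0$ near $q$; the paper's attached-disk maximum-principle argument avoids boundary values of the conjugate entirely. Two smaller points: Steinness of $V\cap\Omega$ is not the reason a single-valued conjugate exists (an annulus is Stein but $\log|z|$ has no conjugate) — you need $V\cap\Omega$ simply connected, which you can arrange near a strongly pseudoconvex point; and if the boundary value of $F-ic_1$ were known to vanish identically on the generic hypersurface $F_j$, boundary uniqueness alone gives $F\equiv ic_1$, so the Pinchuk-reflection step is not needed. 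The final propagation by real analyticity and connectedness is the same as in the paper.
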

\begin{proof}
By Corollary \ref{coro1-3-30}, for any smooth boundary point $q\in U\cap\partial\Omega$ near which $\partial\Omega$ is strongly pseudoconvex, one has 
    \begin{equation}\label{e2-3-31}
    \lim_{z\rightarrow q}J_{\Omega}(z)=\frac{(n+1)^n\pi^n}{n!}.
    \end{equation}
    The Bergman metric  $g_{\Omega}$ {is}  K\"ahler-Einstein if its Ricci curvature $R_{\Omega}=c g_{\Omega}$ for some constant $c$. By Corollary \ref{coro1-3-30}, one has $c=-1$. Consequently, the K\"ahler-Einstein assumption implies that $\log J_{\Omega}$ is a pluriharmonic function on $U\cap\Omega$. Now, for any attached holomorphic disk $\phi:\Delta\rightarrow\Omega$ where $\phi$ is holomorphic in $\Delta:=\{t\in {\mathbb C}: |t|<1\}$,  continuous up to $\overline {\Delta}$, and $\phi(\partial\Delta)$ is contained in the smooth part of $ U\cap \partial\Omega$, we have that $\log
      J_{\Omega}(\phi(t))$ is harmonic. Since it is constant on the strongly pseudoconvex part of the boundary by \eqref{e2-3-31}, it  assumes the value
\[
\log\frac{(n+1)^n\pi^n}{n!}
\]
everywhere on $\Delta$. Now, since $\partial \Omega$ is strongly pseudoconvex near $q$, the union of such disks fills up an open subset of $\partial\Omega$ near $q$. Since $\log
      J_{\Omega}$ is well defined in $U\cap\Omega$  on which it is real analytic, we conclude that $\log J_{\Omega}\equiv
      \log\frac{(n+1)^n\pi^n}{n!}$ over $U\cap\Omega$ as  $U\cap\Omega$ is connected by definition. 
Conversely, if $J_\Omega(z)$ takes a constant value near $p$, then the Bergman metric is obviously K\"ahler-Einstien. Thus, we have the conclusion of the proposition.
\end{proof}
\begin{remark}\label{remark1-10-21}

Note that the zero set of the Bergman kernel function, denoted by $E$, is a complex analytic variety in $\Omega$. 
Thus,  $J_\Omega$ is a well-defined real-analytic function on  $\Omega\setminus E$. Since $\Omega\setminus E$  is connected,  $J_\Omega$ is constant if and only if it is constant on some nonempty open subset of $\Omega$. In particular, when $\Omega$ contains a  $C^2$-smooth strongly pseudoconvex boundary point, the Bergman metric of the  domain  $\Omega$  is Kähler–Einstein  wherever  it is well-defined  if and only if  $J_\Omega=c$ is a constant on a certain open subset of   $\Omega\setminus E$ .
 In this case, $c=\frac{(n+1)^n\pi^n}{n!}$, and the Bergman space $A^2(\Omega)$ separates holomorphic directions at any point in $\Omega\setminus E$ and  thus the Bergman metric is well-defined in $\Omega\setminus E$.
\end{remark}
\section{Stability of the Bergman kernels}\label{sec2}

Our proof of Theorem \ref{main theorem} depends in part on the interior stability of the Bergman kernel functions first proven by Ramadanov \cite{Ra67}. (See also \cite{Kim}, \cite{Boas}, \cite{James}). The classical argument in \cite{Ra67} also proves the following.
\begin{proposition}[Ramadanov]\label{prop1-4-9-2025}
Let $D$ be a bounded  domain in $\mathbb C^n$
containing the origin. Let $\{D_s\}_{s=1}^\infty$ be a sequence of bounded domains in
$\mathbb C^n$ {whose closures converge to the closure of the bounded domain $D$ in
the sense of Hausdorff set convergence in such a way} {such} that for any $\varepsilon > 0$
there exists $N>0$ such that for any $s>N$ we have
$$(1-\varepsilon)D\subset D_s\subset(1+\varepsilon)D.$$
Then the sequence of the Bergman kernels $\{K_{D_s}\}$ converges uniformly to $K_{D}$ in the $C^\infty$-topology on any compact subset of $D$.
\end{proposition}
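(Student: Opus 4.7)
The plan is to prove pointwise convergence of $K_{D_s}(z,\overline z)$ to $K_D(z,\overline z)$ on $D$ and then to upgrade this to $C^\infty$ convergence of the full off-diagonal kernels by a standard normal families plus Cauchy estimates argument. The two structural inputs I would use are the monotonicity of the Bergman kernel under inclusion of domains (if $\Omega_1\subset\Omega_2$ then $K_{\Omega_1}(z,\overline z)\geq K_{\Omega_2}(z,\overline z)$ for $z\in\Omega_1$) and the scaling identity $K_{\alpha D}(\alpha z,\overline{\alpha z})=\alpha^{-2n}K_D(z,\overline z)$ for $\alpha>0$.

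For the pointwise (and in fact locally uniform) convergence on the diagonal, I would fix a compact set $L\subset D$ and, for $\varepsilon>0$ small enough that $L\subset (1-\varepsilon)D$, use the sandwich $(1-\varepsilon)D\subset D_s\subset(1+\varepsilon)D$ (valid for all $s$ sufficiently large) to squeeze
\[
K_{(1+\varepsilon)D}(z,\overline z)\leq K_{D_s}(z,\overline z)\leq K_{(1-\varepsilon)D}(z,\overline z),\quad z\in L.
\]
The scaling identity rewrites the outer terms as $(1\pm\varepsilon)^{-2n}K_D(z/(1\pm\varepsilon),\overline{z/(1\pm\varepsilon)})$, which, by continuity of $K_D$ on $D\times D$, converges uniformly in $z\in L$ to $K_D(z,\overline z)$ as $\varepsilon\to 0$. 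A diagonal argument in $(s,\varepsilon)$ then gives $K_{D_s}\to K_D$ uniformly on $L$.

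For the off-diagonal $C^\infty$ convergence, the Cauchy-Schwarz inequality $|K_{D_s}(z,\overline w)|^2\leq K_{D_s}(z,\overline z)\,K_{D_s}(w,\overline w)$ together with the diagonal bound gives local uniform boundedness of $\{K_{D_s}\}$ on $D\times D$. Since each $K_{D_s}(z,\overline w)$ is holomorphic in $z$ and antiholomorphic in $w$, Montel's theorem yields subsequential limits $\widehat K$ uniformly on compacta; any such $\widehat K$ coincides with $K_D$ on the totally real diagonal by the previous step, and since both functions are real analytic on $D\times D$ and agree on the diagonal, polarization forces $\widehat K\equiv K_D$ on $D\times D$. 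The full sequence therefore converges uniformly on compacta to $K_D$, and Cauchy's integral formula on small polydiscs converts this into uniform convergence of every mixed $\partial_z^{\alpha}\partial_{\overline w}^{\beta}$ derivative, which is precisely the desired $C^\infty$ convergence.

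The main obstacle is really in managing the double limit in the squeeze step: absent genuine monotonicity of $\{D_s\}$, one must couple $s\to\infty$ with $\varepsilon\to 0$ carefully and rely on the continuity of $K_D$ on $D\times D$ to ensure that the outer bounds collapse to $K_D(z,\overline z)$. Once the diagonal convergence is secured, the extension to off-diagonal $C^\infty$ convergence is essentially automatic from normal families and Cauchy estimates.
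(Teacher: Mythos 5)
The paper does not actually prove this proposition --- it only invokes Ramadanov's classical argument --- and your proposal is a correct rendition of exactly that standard argument: monotonicity of the kernel under inclusion plus the dilation identity give the diagonal squeeze between $K_{(1\pm\varepsilon)D}$, and normal families, polarization, and Cauchy estimates upgrade this to $C^\infty$ convergence on compacta. One small wording caveat: agreement on the diagonal plus real-analyticity alone would not force $\widehat K\equiv K_D$; what makes the polarization step valid is that both functions are holomorphic in $(z,\overline w)$, a property your Montel argument does indeed supply for the limit.
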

We next present a  normalization  of $\Omega$ near a  strongly pseudoconvex polyhedral boundary point, which will be crucial in our  rescaling argument:
\begin{lemma}\label{9-23-lem1}
Let $\Omega\subset\mathbb{C}^{n}$ be a domain with $p\in\partial\Omega$ being a strongly pseudoconvex polyhedral boundary point.
Then there exists a coordinate chart $(V,z)$ centered at $p$ and smooth functions $\{\Phi_{j}\}_{j=1}^{m}\in C^{\infty}(V)$ such that
\[
\Omega\cap V=\bigl\{z\in V:\operatorname{Im}z_{j}>\Phi_{j}(z,\overline z),\;1\le j\le m\bigr\}
\]
with
\[
\Phi_{j}(z,\overline z)=\sum_{\alpha,\beta=1}^{n}a^{j}_{\alpha\overline\beta}\,z_{\alpha}\overline z_{\beta}+R_{j}(z,\overline z),\qquad j=1,\dots,m,
\]
where $(a^{j}_{\alpha\overline\beta})$ are positive definite constant matrices for $1\leq j\leq m$. In particular, when $j=1$, $$\sum_{\alpha, \beta=1}^na^1_{\alpha\overline\beta}z_{\alpha}\overline{z_\beta}=|z''|^2+|z_1|^2+|z'P_{m-1}|^2$$ where $z''=(z_{m+1}, \cdots, z_n), ~z'=(z_2, \cdots, z_m)$ and $P_{m-1}$ is a constant invertible matrix of order $m-1$. Moreover  $R_{j}(z,\overline z)=\mathcal{O}(|z|^{3})$ for every $1\le j\le m$.
\end{lemma}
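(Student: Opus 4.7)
The plan is to construct the required coordinates and $\Phi_j$'s by a sequence of moves of two types: (i) holomorphic coordinate changes near $p$, and (ii) multiplicative redefinitions $\rho_j \mapsto u_j \rho_j$ with $u_j$ a smooth positive real-valued function satisfying $u_j(p)=1$ (which preserve the defining equations of $\Omega\cap V$). I track the effect of each move on the $2$-jets of the $\rho_j$'s at $p$ until the stated normal form for $\Phi_1$ is achieved.

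First I would carry out a preliminary normalization. Writing the Taylor expansion at $p$ as $\rho_j = 2\operatorname{Re}\ell_j(z) + H_j(z,\bar z) + O(|z|^3)$, with $\ell_j$ the holomorphic polynomial of degree $\le 2$ and $H_j$ the Hermitian quadratic part, I use the $\mathbb{C}$-linear independence of the $\partial\rho_j|_p$ to set $w_j := -2i\ell_j(z)$ for $j\le m$ and $w_j = z_j$ for $j>m$ (after a preliminary linear relabeling of the $z$-coordinates so that the relevant submatrix is invertible). This is a local biholomorphism and brings each $\rho_j$ into the form $-\operatorname{Im} w_j + H_j(w,\bar w) + O(|w|^3)$ with $H_j$ positive definite Hermitian (by strong plurisubharmonicity of $\rho_j$).

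Next I would normalize $H_1$ block by block. To eliminate the Hermitian cross terms between $z_1$ and $z_2,\ldots,z_n$ in $H_1$, I replace $\rho_1$ by $(1 + 2\operatorname{Re} A_1(z))\rho_1$ for a $\mathbb{C}$-linear form $A_1(z)=\sum\beta_\gamma z_\gamma$; a direct expansion shows the added quadratic contribution $2\operatorname{Re}(A_1)\cdot(-\operatorname{Im} z_1)$ produces Hermitian cross terms $\tfrac{i}{2}(\bar\beta_\gamma z_1\bar z_\gamma - \beta_\gamma \bar z_1 z_\gamma)$ for $\gamma\ge 2$, whose coefficients can be chosen to cancel the existing cross terms. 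The same redefinition creates holomorphic and antiholomorphic quadratic terms in $\rho_1$, which I absorb by a further change $z_1 \mapsto z_1 + h_1(z)$ with $h_1$ a holomorphic polynomial of degree $2$ (this affects $\rho_2,\ldots,\rho_m$ only at cubic order). After rescaling $z_1$ by a positive real factor I reach $H_1 = |z_1|^2 + H_1'(z_2,\ldots,z_n,\bar z_2,\ldots,\bar z_n)$ with $H_1'$ positive definite Hermitian on $\mathbb{C}^{n-1}$. Finally I decouple the $z'$- and $z''$-blocks: the linear shift $\tilde z_\gamma = z_\gamma + \sum_{k=2}^m c_{\gamma k}z_k$ for $\gamma>m$ leaves $z_1,\ldots,z_m$ fixed and so preserves the normal form of every $\rho_j$, while in the block decomposition $H_1' = \bigl(\begin{smallmatrix} H_{PP} & H_{PQ}\\ H_{QP} & H_{QQ}\end{smallmatrix}\bigr)$ the choice $C^T = H_{PQ}H_{QQ}^{-1}$ annihilates the off-diagonal block. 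A final linear change within the $z''$-variables alone normalizes $H_{QQ}$ to $|z''|^2$ by Cholesky, and the positive definite Schur complement $H_{PP} - H_{PQ}H_{QQ}^{-1}H_{QP}$ on the $z'$-block is written as $|z'P_{m-1}|^2$ for some invertible matrix $P_{m-1}$.

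The main technical obstacle is the rigidity of the normal form: any holomorphic coordinate change preserving all the equations $\rho_j = -\operatorname{Im} z_j + \cdots$ for $j\le m$ must act on $z_1,\ldots,z_m$ as the identity (up to positive real rescaling), so the naive diagonalization of a Hermitian form by an arbitrary unitary change of basis is unavailable. The two nonstandard moves used---multiplicatively redefining $\rho_1$ to cancel its $z_1$-cross terms, and a $z'$-dependent shift of the $z''$-variables to cancel the $z'$-$z''$ cross terms---together exploit precisely the remaining freedom and bring $H_1$ to the stated canonical shape.
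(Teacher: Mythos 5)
Your argument is correct, and it reaches the normal form by a genuinely different mechanism than the paper. The paper first arranges (asserting, without detail, a ``re-choice of defining functions'') that the quadratic Hermitian part of $\rho_1$ contains no $z_1$ at all, then diagonalizes the full tangential Levi form of $\rho_1$ to $\sum_{j\ge 2}|z_j|^2$ by a linear change in $(z_2,\dots,z_n)$ --- accepting that this destroys the linear parts $-\operatorname{Im}z_j$ of $\rho_2,\dots,\rho_m$ --- and then restores those linear parts using the residual freedom: a unitary rotation of the tangential variables (which preserves $\sum_{j\ge2}|z_j|^2$) confines the linear forms to $(z_2,\dots,z_m)$, and the invertible change by $P_{m-1}$ turns them back into $-\operatorname{Im}z_j$; this is exactly how the blocks $|z''|^2$ and $|z'P_{m-1}|^2$ arise there. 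Finally, since the $|z_1|^2$ term was discarded at the outset, the paper reinstates it at the very end through the change $z_1=w_1+iw_1^2$, using $|w_1|^2=\operatorname{Re}(w_1^2)+\mathcal O(|w|^3)$ on $\{\rho_1=0\}$. You instead never disturb the linear parts: you keep the coefficient of $|z_1|^2$ (automatically positive by strong plurisubharmonicity, and untouched by your multiplicative move since $A_1$ has no $z_1$-component), cancel only the $z_1$--tangential cross terms, and then decouple the $z'$- and $z''$-blocks by a Schur-complement shift followed by a Cholesky normalization of the $z''$-block. Your route buys two things: it makes explicit the multiplicative redefinition that the paper merely asserts, and it avoids the paper's final reinstatement step, which needs a small extra argument (after solving for the graph, the discrepancy $y_1^2$ between $\operatorname{Re}(z_1^2)$ and $|z_1|^2$ must be absorbed into the $\mathcal O(|z|^3)$ remainder). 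The paper's route, in turn, gets the unit $z''$-block directly from the Levi form and produces $P_{m-1}$ from the linear parts of $\rho_2,\dots,\rho_m$ rather than from a Schur complement; since the lemma only requires some invertible $P_{m-1}$, both constructions serve equally well. Two trivial points in your write-up: the final rescaling making the $|z_1|^2$-coefficient equal to $1$ also requires multiplying $\rho_1$ by a positive constant (harmless, though it slightly exceeds your stated normalization $u_j(p)=1$), and one should note, as you implicitly do, that positive definiteness of the new quadratic part of $\rho_1$ follows from its block-diagonal shape rather than from strong plurisubharmonicity of $u_1\rho_1$, which the multiplication need not preserve.
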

\begin{proof}
    After a change of coordinates, we assume that $p=0$ and 
\[
\Omega\cap V=\{z\in V:\rho_{1}(\xi)<0,\dots ,\rho_{m}(\xi)<0\},\qquad m\geq 2,
\]
with $\rho_{1}(0)=\dots =\rho_{m}(0)=0$ and $\{\partial\rho_{1}(0),\dots,\ \partial\rho_{m}(0)\}$ being  linearly independent over~$\mathbb C$.  Here each $\rho_j$ is strongly plurisubharmoinc  near  $p=0$. With a further   change of  holomorphic  coordinates and a re-choice of  defining functions~$\{\rho_{j}\}$ if necessary, we assume that  near~$p$
\[
\rho_{j}=-\operatorname{Im}\xi_{j}+H_{j}(\xi_{1},\dots ,\xi_{n}),\qquad 1\leq j\leq m,
\]
where
\begin{enumerate}
   \item $H_1=\sum_{\alpha\neq 1, \beta\neq 1}a_{1, \alpha\overline\beta}\xi_{\alpha}\overline{\xi_\beta}+{\mathcal O}(|\xi|^3)$,
   \item $(a_{1, \alpha\overline{\beta}})$  is a positive definite Hermitian $(n-1)\times(n-1)$-matrix,
   \item $H_j=\sum_{\alpha, \beta=1}^n a_{j, \alpha\overline\beta}\xi_{\alpha}\overline{\xi_\beta}+\mathcal O(|\xi|^3)$ and each $(a_{j, \alpha\overline\beta})$ is a positive definite Hermitian $n\times n$ matrix for $2\leq j\leq n$.
   \end{enumerate}
In what follows, we denote by $\mathcal{O}(3)$ any term satisfying $\mathcal{O}(3) = \mathcal{O}(|\xi|^3)$ as $\xi \to 0$.
There exists an invertible matrix $C_{n-1}$ of order $n-1$ such that $$C_{n-1}(a_{1, \alpha\overline\beta})_{(n-1)\times(n-1)}\overline {C_{n-1}^t}=I_{n-1}$$ where $C_{n-1}^t$ is the transpose of $C_{n-1}$ and $I_{n-1}$ is the identity matrix of order $n-1$. Choose a new coordinates $\tilde z=(\tilde z_1, \tilde z_2, \cdots, \tilde z_n)$ with 
\begin{equation*}
    \begin{cases}
        \xi_1=\tilde z_1,\\
        (\xi_2, \cdots, \xi_n)=(\tilde z_2, \cdots, \tilde z_n)C_{n-1}.
    \end{cases}
\end{equation*}
With respect to the new coordinates $\tilde z$, we have
\begin{equation*}
    \begin{cases}
        \rho_1=-{\rm Im ~\tilde z_1}+\sum_{j=2}^n|\tilde z_j|^2+\mathcal O(3),\\
        \rho_j=-{\rm Im}~l_j(\tilde z_2, \cdots, \tilde z_n)+\sum_{\alpha, \beta=1}^n\tilde a^j_{\alpha\overline\beta} \tilde z_{\alpha}\overline{\tilde z_{\beta}}+\mathcal O(3).
    \end{cases}
\end{equation*}
Here, $$l_j(\tilde z_2, \cdots, \tilde z_n)=(\tilde z_2, \cdots, \tilde z_n)\alpha_j^t$$
where $\alpha_2^t, \cdots, \alpha_m^t$ are $\mathbb C$-linearly independent constant vectors of length $n-1$. Choose orthonormal vectors
$\beta_{m+1}^{t}, \beta_{m+2}^t, \beta_{n}^t$ such that \begin{equation}\label{11-11-a1}
\beta_j^t\perp\alpha_k^t, \quad m+1\leq j\leq n, ~2\leq k\leq m.
\end{equation} Then we choose $\beta_2^t, \cdots, \beta_m^t$ which extend $\{\beta_{m+1}^t, \cdots, \beta_{n}^t\}$ to a unitary matrix $$U_{n-1}=(\beta_2^t, \cdots, \beta_m^t, \beta_{m+1}^t, \cdots, \beta_n^t).$$
Now we define new coordinates $\hat z$ with 
\begin{equation*}
    \begin{cases}
        \tilde z_1=\hat z_1,\\
        (\tilde z_2, \cdots, \tilde z_n)=(\hat z_2, \cdots, \hat z_n)\overline{U^t_{n-1}}.
    \end{cases}
\end{equation*}
By the orthogonal property (\ref{11-11-a1}), then 
with respect to $\hat z$, we have $$l_j(\hat z)=(\hat z_2, \cdots, \hat z_m)\hat \alpha_j^t, \quad 2\leq j\leq m$$
where $\{\hat\alpha_j^t\}_{j=2}^m$ with each $\hat\alpha_j^t$ the first $m-1$ components of the vector $\overline{U^t_{n-1}}\alpha_j^t$ are $\mathbb C$-linearly independent vectors. Choose an invertible matrix $P_{m-1}$ such that $$P_{m-1}(\hat\alpha_2^t, \cdots, \hat\alpha_m^t)=I_{m-1}.$$
Then we choose new coordinates $w$ with 
\begin{equation*}
    \begin{cases}
        \hat z_1=w_1\\
        (\hat z_2, \cdots, \hat z_m)=(w_2, \cdots, w_m)P_{m-1}\\
        (\hat z_{m+1}, \cdots, \hat z_n)=(w_{m+1}, \cdots, w_n).
    \end{cases}
\end{equation*}
With respect to the new coordinates $w$, we have
\begin{equation*}
    \begin{cases}
        \rho_1=-{\rm Im}~w_1+\sum_{j=m+1}^n|w_j|^2+|(w_2, \cdots, w_m)P_{m-1}|^2+\mathcal O(3),\\
        \rho_j=-{\rm Im}~w_j+\sum_{\alpha, \beta=1}^nb^j_{ \alpha\overline\beta}w_\alpha\overline{w_\beta}+\mathcal O(3), \quad 2\leq j\leq m,
    \end{cases}
\end{equation*}
where $(b^j_{\alpha\overline\beta})_{n\times n}$ are positive Hermitian matrices for $2\leq j\leq m$.
Since $$w_1\overline{w_1}=w_1(w_1-2iv_1)=w_1^2-2iw_1v_1,$$ then on $\{\rho_1=0\}\cap V$ we have
$$|w_1|^2={\rm Im} (iw_1^2)+{\mathcal O}(|w|^3)$$
which implies that
$${\rm Im}(w_1+iw_1^2)=\sum_{j=m+1}^n|w_j|^2+|w_1|^2+|(w_2, \cdots, w_m)P_{m-1}|^2+\mathcal O(3).$$
After a further coordinates change $z_1=w_1+i w^2, z_2=w_2, \cdots, z_n=w_n$, we get
\[
\Phi_1(z,\overline z)=\sum_{\alpha={m+1}}^{n}|z_\alpha|^2+|z_1|^2+|(z_2, \cdots, z_m)P_{m-1}|^2+{\mathcal O}(3).
\]
We complete the proof.
\end{proof}

Let $\Phi_j=\sum_{\alpha, \beta=1}^n a^j_{\alpha\overline\beta}z_{\alpha}\overline z_\beta+R_j,\quad 1\leq j\leq m$ be as in Lemma \ref{9-23-lem1}. 
Write 
\begin{equation}\label{11-23-a1}
U_0=\{z\in\mathbb C^n: |z_j|<\varepsilon_0, j=1,\dots,n\}\end{equation}  where  $\varepsilon_0\ll1$  is  such that on $U_0$ one has
$$ |R_j(z)|\leq \frac{A_0}{2}|z|^2, \quad 1 \leq j \leq m, \quad \forall z\in U_0,$$
where 
\begin{equation}\label{10-27-a3}
A_0:=\min\{A_j: 1\leq j\leq m\}
\end{equation}
with $A_j$ being  the minimum eigenvalue of the matrix $(a^j_{\alpha\overline\beta})$. Furthermore, we may assume that 
$$y_1>\frac12 \sum_{j=m+1}^n|z_j|^2, \quad \forall z\in U_0\cap\Omega.$$
We define an in-homogenous tangential  scaling map $L_{\delta}$ as follows:
For $z\in\mathbb C^n$, $${L_{\delta}(z_1, \dots, z_m,z_{m+1},\dots,z_n)=(\delta^{-2}z_1, \delta^{-\frac32}z_2, \dots, \delta^{-\frac32}z_m, \delta^{-1}z_{m+1},\dots,\delta^{-1}z_n)}$$ with $0<\delta\ll1$. Write 
$$\widetilde z_1=\delta^{-2}z_1, \quad \widetilde z_j=\delta^{-\frac32}z_j, \quad \widetilde z_k=\delta^{-1}z_k, \quad 2 \leq j \leq m, ~~{m+1} \leq k \leq n.$$ 
and  $$D_0:=\Omega\cap U_0, \quad \widetilde D_{\delta, 0}:=L_{\delta}(D_0).$$ 
Then $$\widetilde D_{\delta, 0}\subset \widetilde D^\ast:=\{(\widetilde z_1, \widetilde z_2,\cdots, \widetilde z_n): \widetilde y_1>\frac 12 \sum_{j={m+1}}^n|\widetilde z_j|^2, ~\widetilde y_k>0,2\leq k \leq m\}$$ as when $\widetilde z\in \widetilde D_{\delta, 0}$,  we have
\[\begin{cases}
    &\widetilde y_1>\frac12(\delta^2|\widetilde z_1|^2+\delta c_1\sum_{j = {2}}^m|\widetilde z_j|^2+\sum_{j = {m+1}}^n|\widetilde z_j|^2)\geq\frac12\sum_{j = {m+1}}^n|\widetilde z_j|^2,\\
    &\widetilde y_k>\frac{A_0}{2}(\delta^{\frac52}|\widetilde z_1|^2+\delta^{\frac32}\sum_{j = {2}}^m|\widetilde z_j|^2+\delta^{\frac12}\sum_{j = {m+1}}^n|\widetilde z_j|^2)>0, 2 \leq k \leq m
\end{cases}\]
Here, $c_1$ is a constant  depending only on $P_{m-1}$ which is given in Lemma \ref{9-23-lem1}.
Define the  linear fractional  transformation $\Phi$ as follows: 
$$\Phi(\widetilde z_1, \dots, \widetilde z_n)=\left(\frac{\widetilde z_1-i}{\widetilde z_1+i},\frac{\widetilde z_2-i}{\widetilde z_2+i}, \dots, \frac{\widetilde z_m-i}{\widetilde z_m+i}, \frac{2\widetilde z_{m+1}}{\widetilde z_1+i},\dots, \frac{2\widetilde z_n}{\widetilde z_1+i}\right).$$ Its inverse is given by  $$\Phi^{-1}(w_1, \dots, w_n)=\left(\frac{i(1+w_1)}{1-w_1},\frac{i(1+w_2)}{1-w_2}, \dots,  \frac{i(1+w_m)}{1-w_m}, \frac{iw_{m+1}}{1-w_1},\dots,\frac{iw_n}{1-w_1}\right)$$
with $$\widetilde z_j=\frac{i(1+w_j)}{1-w_j}, ~1\leq j\leq m, ~\widetilde z_k=\frac{iw_k}{1-w_1}, {m+1} \leq k \leq n$$
and $$\widetilde y_j=\frac{1-|w_j|^2}{|1-w_j|^2},\quad 1 \leq j \leq m,$$
where we use the notations $\widetilde z_j=\widetilde x_j+i\widetilde y_j, ~1\leq j\leq n$.
Set 
\begin{equation}
  \label{omega_0}  
\widehat\Omega_{\delta, 0}:=\Phi(\widetilde{D}_{\delta, 0}).
\end{equation}
Then $\widehat\Omega_{\delta, 0}\subset\Phi(\widetilde D^\ast) $ with 

\begin{equation}
\begin{split}
    \Phi(\widetilde D^\ast)
    &=\left\{(w_1, \dots, w_n): \frac{1-|w_1|^2}{|1-w_1|^2}>\frac12\frac{\sum_{j={m+1}}^n|w_j|^2}{|1-w_1|^2},~\frac{1-|w_k|^2}{|1-w_k|^2}>0, 2 \leq k \leq m\right\}\\
   & =\left\{(w_1, \dots, w_n):|w_1|^2+\frac12 \sum_{j={m+1}}^n|w_j|^2<1, ~~|w_k|<1, ~~2 \leq k \leq m\right\}
    \end{split}
\end{equation}
which shows that $\widehat\Omega_{\delta, 0}$ is a bounded domain for $0<\delta\ll1$.  

The main technical result of this section is the following:
\begin{proposition}\label{prop11}
Assume $m\geq 2$.
For any $\hat\varepsilon>0$, there exists a $\delta_0>0$ such that when $\delta<\delta_0$ one has
\begin{equation}\label{9-23-a2}
(1-\hat\varepsilon)(\mathcal{I}(\mathbb B^{n-m+1}\times \Delta^{m-1}))\subset\widehat\Omega_{\delta, 0}\subset (1+\hat\varepsilon) (\mathcal{I}(\mathbb B^{n-m+1}\times \Delta^{m-1}))
\end{equation}
where  $\mathbb B^{n-m+1}$ is the unit ball in $\mathbb C^{n-m+1}$ and $\Delta^{m-1}$ is the unit polydisk in $\mathbb C^{m-1}$ and 
$\mathcal I(w_1, w_{m+1}, \cdots, w_n, w_{2}, \cdots, w_m)=(w_1, \cdots, w_n)$. Hence,
$$\mathcal {I}(\mathbb B^{n-m+1}\times\Delta^{m-1})=\{w\in\mathbb C^n: |w_1|^2+\sum_{j=m+1}^n|w_j|^2<1,\quad |w_k|<1, 2\leq k\leq m\}.$$
\end{proposition}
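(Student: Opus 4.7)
The plan is to control the scaled defining functions of $\widetilde D_{\delta, 0}$ and transfer the resulting comparisons to $\widehat\Omega_{\delta, 0}$ via the Cayley transform $\Phi$. Substituting $z = L_\delta^{-1}(\widetilde z)$ into $\rho_j$ and normalizing by $\delta^{2}$ for $j = 1$ and by $\delta^{3/2}$ for $2 \le j \le m$ gives
\begin{align*}
\widetilde\rho_1(\widetilde z) &= -\widetilde y_1 + |\widetilde z''|^2 + \delta^2|\widetilde z_1|^2 + \delta|\widetilde z'P_{m-1}|^2 + \delta^{-2}R_1(L_\delta^{-1}\widetilde z),\\
\widetilde\rho_k(\widetilde z) &= -\widetilde y_k + O(\delta^{1/2}) \qquad (2 \le k \le m),
\end{align*}
whose formal limit as $\delta\to 0$ is $\widetilde D_0 := \{\widetilde y_1 > |\widetilde z''|^2,\ \widetilde y_k > 0\}$. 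A direct calculation with the explicit $\Phi$ shows $\Phi(\widetilde D_0) = \mathcal{I}(\mathbb B^{n-m+1}\times \Delta^{m-1})$, so the plan is to establish the sandwich inclusion by comparing $\widetilde D_{\delta, 0}$ with $\widetilde D_0$ and pushing everything through $\Phi$.

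For the lower inclusion, I would observe that $(1-\hat\varepsilon)\mathcal{I}(\mathbb B^{n-m+1}\times \Delta^{m-1})$ is compactly contained in $\mathcal{I}(\mathbb B^{n-m+1}\times \Delta^{m-1})$, so $K := \Phi^{-1}((1-\hat\varepsilon)\mathcal{I}(\mathbb B^{n-m+1}\times \Delta^{m-1}))$ is a compact subset of $\widetilde D_0$ on which the limiting defining functions are uniformly negative by a definite amount. Since $\widetilde z \in K$ is bounded, $|L_\delta^{-1}(\widetilde z)| = O(\delta)$, the error $\delta^{-2}R_1(L_\delta^{-1}\widetilde z) = O(\delta)$, and one gets uniform convergence of the scaled defining functions to the limiting ones on $K$. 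Hence for $\delta$ small, $\widetilde\rho_j < 0$ on $K$, so $K \subset \widetilde D_{\delta, 0}$ (also using that $L_\delta(U_0)$ exhausts $\mathbb C^n$ as $\delta\to 0$), which yields $(1-\hat\varepsilon)\mathcal{I}(\mathbb B^{n-m+1}\times \Delta^{m-1}) = \Phi(K) \subset \widehat\Omega_{\delta, 0}$.

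The upper inclusion is the main obstacle since $\widehat\Omega_{\delta, 0}$ is not compactly contained in $(1+\hat\varepsilon)\mathcal{I}(\mathbb B^{n-m+1}\times \Delta^{m-1})$: the Cayley transform blows up as $|w_k|\to 1$, corresponding to $\widetilde z_k\to\infty$, so uniform convergence of defining functions fails in this regime. My plan is to use the absorption-type estimate
$$\delta^{-2}|R_1(L_\delta^{-1}\widetilde z)| \le C_T\varepsilon_0\bigl(\delta^2|\widetilde z_1|^2 + \delta|\widetilde z'|^2 + |\widetilde z''|^2\bigr),$$
which follows from $|R_1(z)| \le C_T|z|^3 \le C_T\varepsilon_0|z|^2$ together with the identity $\delta^{-2}|L_\delta^{-1}\widetilde z|^2 = \delta^2|\widetilde z_1|^2 + \delta|\widetilde z'|^2 + |\widetilde z''|^2$. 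Combined with $|\widetilde z'P_{m-1}|^2 \ge c|\widetilde z'|^2$ from the invertibility of $P_{m-1}$, the inequality $\widetilde\rho_1 < 0$ rearranges (after absorbing the $|\widetilde z'|^2$ cross-term) to the pointwise estimate $\widetilde y_1 > (1-C\varepsilon_0)|\widetilde z''|^2$ for a universal constant $C$.

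Translating this back through $\Phi$ via $\widetilde y_1 = (1-|w_1|^2)/|1-w_1|^2$ and $|\widetilde z''|^2 = \sum_{j\ge m+1}|w_j|^2/|1-w_1|^2$ gives $|w_1|^2 + (1-C\varepsilon_0)\sum_{j\ge m+1}|w_j|^2 < 1$, hence $|w_1|^2 + \sum_{j\ge m+1}|w_j|^2 \le 1/(1-C\varepsilon_0)$. Since $|w_k| < 1 < 1+\hat\varepsilon$ holds automatically for $w \in \widehat\Omega_{\delta, 0} \subset \Phi(\widetilde D^\ast)$, choosing $\varepsilon_0$ small enough that $1/(1-C\varepsilon_0) < (1+\hat\varepsilon)^2$ gives $w \in (1+\hat\varepsilon)\mathcal{I}(\mathbb B^{n-m+1}\times \Delta^{m-1})$. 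The subtlety here is that $\varepsilon_0$ must be shrunk depending on $\hat\varepsilon$, which is consistent with the localization framework that allows us to refine the neighborhood $U_0$ around $p$.
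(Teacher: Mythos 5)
Your treatment of the first inclusion is fine and is essentially the paper's argument in a different packaging (the paper writes down explicit truncated sets $\widetilde D^\ast_{\varepsilon_c,R}$ and computes their $\Phi$-images, while you pull back the compact set $\overline{(1-\hat\varepsilon)\mathcal I(\mathbb B^{n-m+1}\times\Delta^{m-1})}$ through $\Phi^{-1}$ and use uniform convergence of the rescaled defining functions there; either route works, though you should take the closure of your $K$ to have genuine compactness).

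The upper inclusion, however, has a genuine gap, and it is exactly the point you flag at the end. In the proposition $\widehat\Omega_{\delta,0}=\Phi\circ L_\delta(\Omega\cap U_0)$ is built from a polydisc $U_0$ of a \emph{fixed} radius $\varepsilon_0$, chosen once (so that $|R_j|\le \tfrac{A_0}{2}|z|^2$ on $U_0$) before $\hat\varepsilon$ is given; the statement quantifies only over $\delta$. With $\varepsilon_0$ fixed, your absorption estimate $\delta^{-2}|R_1(L_\delta^{-1}\widetilde z)|\le C\varepsilon_0(\delta^2|\widetilde z_1|^2+\delta|\widetilde z'|^2+|\widetilde z''|^2)$ only yields $\widetilde y_1>(1-C\varepsilon_0)|\widetilde z''|^2$, hence containment of $\widehat\Omega_{\delta,0}$ in the \emph{fixed} dilate $\{|w_1|^2+\sum_{j\ge m+1}|w_j|^2<(1-C\varepsilon_0)^{-1}\}$, independent of $\delta$; this cannot give $(1+\hat\varepsilon)\mathcal I(\mathbb B^{n-m+1}\times\Delta^{m-1})$ for arbitrary $\hat\varepsilon$. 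Shrinking $\varepsilon_0$ with $\hat\varepsilon$ changes the domain $\widehat\Omega_{\delta,0}$ itself, so what you prove is a different (weaker) statement than the proposition; as stated, the sandwich must hold for the single family $\{\widehat\Omega_{\delta,0}\}_{\delta}$, because Corollary \ref{coro1-9-24} applies Ramadanov's stability to that fixed family and the proof of Theorem \ref{main theorem} evaluates $J_{\Omega\cap U_0}(\xi_\delta)=J_{\widehat\Omega_{\delta,0}}(0)$ with $U_0$ fixed (your version could only be rescued by an additional diagonal argument over shrinking neighborhoods, which you do not supply).

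The missing idea is the paper's two-region decomposition, which is what allows $U_0$ to stay fixed: given $\hat\varepsilon$, first pick a small radius $\varepsilon_c\ll\varepsilon_0$ so that $|R_j|\le c|z|^2$ on the smaller polydisc $D_{\varepsilon_c}$ with $c$ as small as needed; points of $D_{\varepsilon_c}$ are then handled exactly by your absorption argument and land in $\{|w_1|^2+(1-c)\sum_{j\ge m+1}|w_j|^2<1,\ |w_k|<1\}$. For the remaining points $z\in D_0\setminus D_{\varepsilon_c}$, some coordinate satisfies $|z_{j_0}|\ge\varepsilon_c$, so after rescaling at least one of $|\widetilde z_1|\ge\varepsilon_c\delta^{-2}$, $|\widetilde z_j|\ge\varepsilon_c\delta^{-3/2}$, $|\widetilde z_\ell|\ge\varepsilon_c\delta^{-1}$ holds, and the Cayley transform then forces $|w_1-1|+\sum_{j\ge m+1}|w_j|^2\le c^\ast\delta/\varepsilon_c$ while $|w_k|<1$; that is, the region where the Taylor remainder is not small is crushed into an $O(\delta/\varepsilon_c)$-neighborhood of the boundary point $(1,0,\dots,0)$, which lies inside $(1+\hat\varepsilon)\mathcal I(\mathbb B^{n-m+1}\times\Delta^{m-1})$ once $\delta$ is small. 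Choosing first $\varepsilon_c$ (depending on $\hat\varepsilon$) and then $\delta<\delta_0(\hat\varepsilon,\varepsilon_c)$ gives the upper inclusion for the fixed $\widehat\Omega_{\delta,0}$; without this step your argument does not prove the proposition as stated.
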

\begin{proof}
First, for compact subsets $K_1\Subset \mathbb B^{n-m+1}, K_2\Subset \Delta^{m-1}$ we will verify that $$\mathcal I(K_1\times K_2)\subset \widehat\Omega_{\delta, 0}$$ when $\delta$ is sufficiently small.

Fix any small $c>0$,  there exists an $0<\varepsilon_c\ll\varepsilon_0$ where $\varepsilon_0$ is given in (\ref{11-23-a1}), such that
$$|R_j(z)|\leq c|z|^2, \quad 1\leq j\leq m,   \text{when }\quad |z_k|<\varepsilon_c, ~1\leq k\leq n.$$
Denote $$D_{\varepsilon_c}:=\Omega\cap\{(z_1, \cdots, z_n)\in\mathbb C^n: |z_k|<\varepsilon_c, \quad k=1, \cdots, n\}.$$Hence, $\widetilde D_{\varepsilon_c, \delta}:=L_{\delta}(D_{\varepsilon_c})$ contains the following
\begin{equation*}
\begin{cases}
    \widetilde y_1>(1+c)(\delta^2|\widetilde z_1|^2+\delta c_2\sum_{j = {2}}^m|\widetilde z_j|^2+\sum_{j = {m+1}}^n|\widetilde z_j|^2)\\
    \widetilde y_k>(A^*_0+c)(\delta^{\frac52}|\widetilde z_1|^2+\delta^{\frac32}\sum_{j = {2}}^m|\widetilde z_j|^2+\delta^{\frac12}\sum_{j = {m+1}}^n|\widetilde z_j|^2), ~2\leq k\leq m.\\
    |\widetilde z_1|<\delta^{-2}\varepsilon_c; \quad |\widetilde z_j|<\delta^{-\frac32}\varepsilon_c, ~2\leq j\leq m; \quad|\widetilde z_k|<\delta^{-1}\varepsilon_c, ~m+1\leq k\leq n.
\end{cases}
\end{equation*}
Here, $c_2>0$ depends only on $P_{m-1}$  given in Lemma \ref{9-23-lem1} and
$$A^*_0:=\max\{A^*_j: 2\leq j\leq m\}$$ 
with $A^*_j$ being  the maximum eigenvalue of the matrix $(a^j_{\alpha\overline\beta})$.
For $R>0$, write

\begin{equation*}
\widetilde D_{\varepsilon_c, R}^\ast:=
    \begin{cases}
        \widetilde y_1>(1+c)\sum_{j = {m+1}}^n|\widetilde z_j|^2+\eta_{1, R}\delta,\\
        \widetilde y_k>\eta_{2, R}\delta^{\frac12},~2\leq k\leq m,\\
        |\widetilde z_j|<R, 1\leq j\leq n.
        
    \end{cases}
\end{equation*}
where 
\begin{equation*}
    \begin{cases}
        \eta_{1, R}=(1+c)[\delta R^2+(m-1)c_2R^2],\\
        \eta_{2, R}=(A^*_0+c)[\delta^2+(m-1)\delta+(n-m)]R^2.
    \end{cases}
\end{equation*}
When $R<\delta^{-1}\varepsilon_c$,  it holds that
$$\widetilde D_{\varepsilon_c, R}^\ast\subset \widetilde D_{\varepsilon_c, \delta}.$$
Then
\begin{equation*}
    \Phi(D_{\varepsilon_c, R}^\ast)=
    \begin{cases}
        \frac{1-|w_1|^2}{|1-w_1|^2}>(1+c)\frac{\sum_{j=m+1}^n|w_j|^2}{|1-w_1|^2}+\eta_{1, R}\delta,\\
        \frac{1-|w_k|^k}{|1-w_k|^2}>\eta_{2, R}\delta^{\frac12}, ~2\leq k\leq m,\\
        |\frac{1+w_j}{1-w_j}|<R, ~1\leq j\leq m,\\
        ~\frac{|w_\ell|}{|1-w_1|}<R,~m+1\leq \ell\leq n.
    \end{cases}
\end{equation*}
That is,
\begin{equation*}
    \Phi(D^\ast_{\varepsilon_c, R})=\begin{cases}
        |w_1|^2+(1+c)\sum_{j=m+1}^n|w_j|^2+\eta_{1, R}\delta(|1-w_1|^2)<1,\\
        |w_k|^2+\eta_{2, R}\delta^{\frac12}|1-w_k|^2<1, ~2\leq k\leq m,\\
        |1+w_j|<R|1-w_j|, 1\leq j\leq m,\\
        |w_\ell|<R|1-w_1|, ~m+1\leq \ell\leq n.
    \end{cases}
\end{equation*}
For any $0<\varepsilon'\ll 1$, let  $R$ be a fixed but sufficiently large number such  that
\[
R>\frac{2}{\varepsilon'}.
\]  
Then there exists a  $\delta_{0}>0$ such that for all $\delta<\delta_{0}$ one has
\begin{align*}
    &\eta_{1, R}\delta(1-|w_1|^2)<1-(1-\varepsilon')^2, \\
    &\eta_{2, R}\delta^{\frac12}|1-w_k|^2<1-(1-\varepsilon')^2\ \text {for} \  2\leq k\leq m.
   \end{align*}

It follows that $\Phi(D^\ast_{\varepsilon_c, R})$ contains the set
\begin{equation*}
    \begin{cases}
    |w_1|^2+(1+c)\sum_{j=m+1}^n|w_j|^2<(1-\varepsilon')^2,\\
    |w_k|^2<(1-\varepsilon')^2, \quad 2\leq k\leq m,
     \\
        |1+w_j|<R|1-w_j|, 1\leq j\leq m,\\
        |w_\ell|<R|1-w_1|, ~m+1\leq \ell\leq n.
\end{cases}
\end{equation*}
which, when  $\varepsilon'\ll 1$,  contains the compact set $\mathcal I(K_1\times K_2)$. Hence we first let $R$ be sufficiently large, then we can find a $\delta_0>0$ sufficiently small such that when $0<\delta<\delta_0$,   $\widehat\Omega_{\delta, 0}$ 
contains $\mathcal I(K_1\times K_2)$ whenever $\delta<\delta_0$. Thus, 
we   conclude  the proof of the  first inclusion  in  (\ref{9-23-a2}).

In the following, we prove the second inclusion of (\ref{9-23-a2}). Since
\begin{equation}
    \widetilde D_{\varepsilon_c, \delta}:=L_{\delta}(D_{\varepsilon_c})
    \subset
    \left\{\widetilde y_1>(1-c)\sum_{j=m+1}^n|\widetilde z_j|^2; \qquad \widetilde y_k>0, \quad 2\leq k\leq m   \right\},
\end{equation}
we have 
\begin{equation}\label{10-27-a1}
\widehat\Omega_{\varepsilon_c, \delta}:=\Phi(\widetilde D_{\varepsilon_c, \delta})\subset\left\{|w_1|^2+(1-c)\sum_{j=m+1}^n|w_j|^2<1, \quad |w_k|<1, ~2\leq k\leq m\right\}.
\end{equation}

Now, we assume that $z\in D_0\setminus D_{\varepsilon_c}$. Then $|z_{j_0}|\geq \varepsilon_c$ for some $j_0$. Hence, after scaling by $L_{\delta}$, we have at least one of the following $n$ inequalities  $$|\widetilde z_1|>\frac{\varepsilon_c}{\delta^2};\quad |\widetilde z_j|>\frac{\varepsilon_c}{\delta^{\frac32}},\quad  2\leq j\leq m; \quad|\widetilde z_l|>\frac{\varepsilon_c}{\delta},\quad m+1\leq l\leq n.$$ Then after the linear fractional transformation $\Phi$, at least one of the following three cases holds:
\begin{enumerate}
    \item $|1+w_1|>\frac{\varepsilon_c}{\delta^2}|1-w_1|$,
    \item There exists a $j$ with $m+1\leq j\leq n$ such that 
    $|w_j|>\frac{\varepsilon_c}{\delta}|1-w_1|$,
    \item There exists a $j$ with $2\leq j\leq m$ such that $|1+w_j|>\frac{\varepsilon_c}{\delta^{\frac32}}|1-w_j|$.
\end{enumerate}
In the first or second case, one has $$|w_1-1|\leq \frac{\delta}{\varepsilon _c}.$$
Since $|w_1|^2+\frac12\sum_{j=m+1}^n|w_j|^2<1$,  we have $$|w_j|^2\leq \frac{4\delta}{\varepsilon_c}, m+1\leq j\leq n.$$ Hence, in the first or second case, we have 
$$|w_1-1|+\sum_{j=m+1}^n|w_j|^2\le \frac{4(n-m) \delta+\delta}{\varepsilon_c} \ \text{for}\ 0<\delta\ll1.$$
In the third case, first we  have
\begin{equation}\label{25-9-23a1}
\widetilde y_1>\frac12\left(\delta^2|\widetilde z_1|^2+\delta c_1\sum_{j=2}^m|\widetilde z_j|^2+\sum_{j=m+1}^n|\widetilde z_j|^2\right); \quad\widetilde y_j>0,\quad 2\leq j\leq m.
\end{equation}
The third case is equivalent to $|\widetilde z_j|>\frac{\varepsilon_c}{\delta^{\frac32}}$ for some $ 2\leq j\leq m$. Then from (\ref{25-9-23a1}) we have $\widetilde y_1>\frac{\varepsilon^2_c}{2\delta^2}\rightarrow\infty$ as $\delta\rightarrow0$ which implies that $$|w_1-1|\leq \frac{2\delta}{\varepsilon_c}.$$
Combining with $|w_1|^2+\frac{1}{2}\sum_{j=m+1}^n|w_j|^2<1$,  we once more have   $$\sum_{j=m+1}^n|w_j|^2\leq c^\ast \frac{\delta}{\varepsilon_c} ,$$
where $c^\ast$ is a constant independent of $\varepsilon_c, \delta$, which may be different in each different context.
Thus, in the third case, we still have
$$|w_1-1|+\sum_{j=m+1}^n|w_j|^2\le c^\ast \frac{ \delta}{\varepsilon_c}, \ 0<\delta\ll1,$$
Hence, when $z\in D_0\setminus D_{\varepsilon_c}$, that is, for $w=\Phi\circ L_{\delta}(z)\in \widehat\Omega_{\delta, 0}\setminus \widehat\Omega_{\varepsilon_c, \delta}$ we  have
\begin{equation}\label{10-27-a2}
|w_j|<1, \quad 2\leq j\leq m;  \quad |w_1-1|+\sum_{j=m+1}^n|w_j|^2\le c^\ast \frac{{\delta}}{\varepsilon_c}, \ \text{for}\ 0<\delta\ll1.
\end{equation}
Then (\ref{10-27-a1}) and (\ref{10-27-a2}) imply the second inclusion of (\ref{9-23-a2}).  The proof of  Proposition \ref{prop1} is complete.
\end{proof}

An immediate consequence of Proposition \ref{prop1-4-9-2025} and Proposition \ref{prop1} is the following
\begin{corollary}\label{coro1-9-24}
The Bergman kernel $K_{\widehat\Omega_{\delta, 0}}$ converges to $K_{\mathcal{I}(\mathbb B^{n-m+1}\times \Delta^{m-1})}$ uniformly in the $C^\infty$-topology on compact subsets of $\mathcal{I}(\mathbb B^{n-m+1}\times \Delta^{m-1})$. As a consequence, $$J_{\widehat\Omega_{\delta, 0}}\rightarrow J_{\mathcal{I}(\mathbb B^{n-m+1}\times \Delta^{m-1})}, \quad \text{as}~\delta\rightarrow 0$$ on any compact subset of $\mathcal{I}(\mathbb B^{n-m+1}\times \Delta^{m-1})$. 
\end{corollary}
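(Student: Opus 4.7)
The plan is to combine the geometric sandwich of Proposition~\ref{prop11} with Ramadanov's stability theorem (Proposition~\ref{prop1-4-9-2025}) to get $C^\infty$ convergence of the Bergman kernels, and then to deduce the convergence of the Bergman invariant $J$ by differentiating. Write $D_0^\ast := \mathcal{I}(\mathbb B^{n-m+1}\times \Delta^{m-1})$ for the target domain. Note that $D_0^\ast$ is bounded, pseudoconvex, and contains the origin, so Ramadanov's theorem is applicable to it.

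First I would recast the statement sequentially: pick any sequence $\delta_s \downarrow 0$ and verify that $\{\widehat\Omega_{\delta_s,0}\}$ satisfies the hypothesis of Proposition~\ref{prop1-4-9-2025} with limit $D_0^\ast$. Given $\varepsilon > 0$, applying Proposition~\ref{prop11} with $\hat\varepsilon = \varepsilon$ furnishes some $\delta_0 > 0$ such that $(1-\varepsilon)D_0^\ast \subset \widehat\Omega_{\delta,0} \subset (1+\varepsilon)D_0^\ast$ for all $\delta < \delta_0$. Since $\delta_s \to 0$, eventually $\delta_s < \delta_0$, so the requisite Hausdorff sandwich inclusions hold for all large $s$. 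Proposition~\ref{prop1-4-9-2025} therefore yields
\[
K_{\widehat\Omega_{\delta_s,0}} \longrightarrow K_{D_0^\ast}
\]
uniformly in the $C^\infty$-topology on every compact subset of $D_0^\ast$. Since the sequence $\delta_s$ was arbitrary, the same convergence holds as $\delta \to 0$ (not just along subsequences).

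It remains to transfer this convergence to $J$. Fix a compact $K \Subset D_0^\ast$. Because $D_0^\ast$ is bounded pseudoconvex and contains an interior point, $K_{D_0^\ast}$ is strictly positive and smooth on $K$; the $C^\infty$ convergence of $K_{\widehat\Omega_{\delta,0}}$ then forces $K_{\widehat\Omega_{\delta,0}}$ to be bounded below by a positive constant on $K$ once $\delta$ is sufficiently small, so $\log K_{\widehat\Omega_{\delta,0}} \to \log K_{D_0^\ast}$ in $C^\infty(K)$. Taking mixed second derivatives gives $C^\infty$ convergence of the metric matrices $G_{\widehat\Omega_{\delta,0}} \to G_{D_0^\ast}$ on $K$, hence $\det G_{\widehat\Omega_{\delta,0}} \to \det G_{D_0^\ast}$. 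Dividing by $K_{\widehat\Omega_{\delta,0}}$, which is uniformly bounded away from zero on $K$, gives the claimed convergence
\[
J_{\widehat\Omega_{\delta,0}} \longrightarrow J_{D_0^\ast} \quad \text{as } \delta \to 0
\]
uniformly on $K$. There is no real obstacle: all the work has already been done in establishing Proposition~\ref{prop11}, and the remainder is a routine application of Ramadanov's theorem together with the fact that uniform $C^\infty$ convergence of a positive function passes to its logarithm and to rational combinations of its derivatives.
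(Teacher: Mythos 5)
Your proposal is correct and follows exactly the route the paper intends: the paper states this corollary as an immediate consequence of the sandwich inclusions in Proposition~\ref{prop11} combined with Ramadanov's stability theorem (Proposition~\ref{prop1-4-9-2025}), and your passage from kernel convergence to convergence of $J$ via $\log K$, the metric matrices, and their determinants is the routine verification the paper leaves implicit.
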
 


\section{Bergman-Einstein metrics on unbounded pseudoconvex domains}
We now proceed to the proof of Theorem \ref{main theorem}.

\begin{proof}[Proof of Theorem \ref{main theorem}]
Let $\Omega$ be a pseudoconvex domain which is  strongly pseudoconvex polyhedral at a boundary point $p\in\partial\Omega$ as defined in Definition~\ref{def1-9-24}.  After shrinking $U$ if necessary, we may assume that
\begin{enumerate}
\item[(i)]  there are $C^{2}$-smooth strongly plurisubharmonic functions $\{\rho_{j}\}_{j=1}^{m}$ with $m>1$ on $U$ such that
\begin{equation}\label{9-24-a2}
    U\cap\Omega=\{z\in U:\rho_{1}(z)<0,\dots,\rho_{m}(z)<0\};
\end{equation}
\item[(ii)] the vectors $\partial\rho_{1}(q),\dots,\partial\rho_{m}(q)$ are linearly independent over $\mathbb C$ for $q\in U\cap \overline\Omega$.
\end{enumerate}
By Lemma \ref{25-9-23a1}, after a suitable change of coordinates on a small neighborhood $V\Subset U$ of $p$, we may assume that $p=0$ and 
\begin{equation}\label{9-24-a1}
V\cap\Omega=\{(z_1,\cdots, z_n)\in V: {\rm Im~ }z_1>\Phi_1(z, \overline z), \cdots, {\rm Im}~ z_m>\Phi_m(z, \overline z)\}
\end{equation}
where $$\Phi_1(z, \overline z)=|z|^2+R_1(z), \Phi_j(z, \overline z)=\sum a^j_{\alpha\overline\beta} z_{\alpha}\overline z_{\beta}+R_j(z), \quad 2\leq j\leq m$$
with each remainder $R_j=\mathcal O(|z|^3)$. Write $$U_0=\{z\in\mathbb C^n: |z_j|<\varepsilon_0, 1\leq j\leq n\}$$  with $\varepsilon_0\ll 1$  such that $U_0\Subset V$ and on $U_0$ one has
$$|R_j(z)|\leq \frac{A_0}{2}|z|^2, \forall z\in U_0, 1\leq j\leq m,$$
where $A_0$ is defined in (\ref{10-27-a3}).
We first construct a bounded continuous plurisubharmonic function $\psi$ in $\Omega$ where $\psi$ is strictly plurisubharmonic near $p=0$ as follows: 
\begin{lemma}\label{le3-28-1}
After shrinking $U_0$ if necessary, there exists a plurisubharmonic function  $\psi:\Omega\rightarrow (-\infty, 0)$  such that $$\psi (z)>-c_0, ~\left(\frac{\partial^2\psi(z)}{\partial z_j\partial\overline z_k}\right)\geq c ~I_{n}, ~~z\in U_0\cap\Omega$$for some constants $c_0>0,~c>0$.
\end{lemma}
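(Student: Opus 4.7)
The plan is to combine the defining functions $\rho_1,\dots,\rho_m$ with a bounded plurisubharmonic correction built from a local peak function, to produce a strongly plurisubharmonic local candidate that is strictly bounded above by a negative constant on the boundary of a small ball around $p$, and then to extend to all of $\Omega$ by taking the maximum with that constant. I would start from $\tilde\rho(z):=\sum_{j=1}^m\rho_j(z)$, which is strongly plurisubharmonic on $U$ (sum of strongly plurisubharmonic functions) with $\tilde\rho(p)=0$ and $\tilde\rho<0$ on $U\cap\Omega$, so after shrinking $U$ there is $c>0$ with $\bigl(\partial^2\tilde\rho/\partial z_j\partial\bar z_k\bigr)\geq c\,I_n$. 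The obstacle is that $\tilde\rho$ also vanishes on the full corner stratum $\{\rho_1=\cdots=\rho_m=0\}$, a $(2n-m)$-dimensional piece of $\partial\Omega$ passing through $p$, so $\tilde\rho$ alone cannot be continuously patched to a negative constant on $\Omega\setminus B$ for a small ball $B$ around $p$.

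To bypass this, I would use a peak function. Working in the normalized coordinates of Lemma~\ref{9-23-lem1}, so that $p=0$ and $\mathrm{Im}\,z_1>\Phi_1(z,\bar z)\geq c_*|z|^2$ on $\Omega\cap V$ for $z$ near the origin, the entire function $h(z):=e^{iz_1}$ satisfies $h(0)=1$ and $|h(z)|^2=e^{-2\,\mathrm{Im}\,z_1}\leq e^{-2\Phi_1(z,\bar z)}<1$ for every $z\in\overline{\Omega\cap V}\setminus\{0\}$. Fix an open ball $B\Subset V$ around $p$; since $\partial B\cap\overline\Omega$ is compact and misses $p$, there is $\eta_0>0$ with $|h|\leq 1-\eta_0$ there, and for $k$ large enough $(1-\eta_0)^{2k}<1/2$. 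Then
\[
\tilde\psi(z):=\tilde\rho(z)+\bigl(|h(z)|^{2k}-1\bigr)
\]
remains strongly plurisubharmonic with the same Hessian lower bound $c\,I_n$ (because $|h|^{2k}$ is plurisubharmonic), is non-positive on $B\cap\Omega$, and now satisfies $\tilde\psi\leq -1/2$ uniformly on $\partial B\cap\overline\Omega$.

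Finally, with $\varepsilon=1/4$ and after shrinking $U_0$ so that $U_0\Subset B$ and $\tilde\psi>-\varepsilon$ on $U_0\cap\Omega$ (possible because $\tilde\psi$ is continuous with $\tilde\psi(0)=0$), I would set
\[
\psi(z):=\begin{cases}\max\bigl(\tilde\psi(z),\,-\varepsilon\bigr),& z\in B\cap\Omega,\\ -\varepsilon,& z\in\Omega\setminus B.\end{cases}
\]
Since $\tilde\psi<-\varepsilon$ on a neighborhood of $\partial B\cap\overline\Omega$, both formulas reduce to $-\varepsilon$ near $\partial B\cap\Omega$, so $\psi$ is continuous and plurisubharmonic on $\Omega$ (max of two psh functions on $B\cap\Omega$, locally constant elsewhere), takes values in $[-\varepsilon,0)$, and coincides with $\tilde\psi$ on $U_0\cap\Omega$, where the Hessian bound $c\,I_n$ is inherited. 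Taking $c_0=\varepsilon$ completes the construction. The main obstacle I anticipate is precisely the passage from $\tilde\rho$ to $\tilde\psi$: one must add a bounded plurisubharmonic term that pushes the whole function uniformly below a negative constant on $\partial B\cap\overline\Omega$, including at the non-trivial corner points where $\tilde\rho$ itself vanishes, without destroying the strong plurisubharmonicity. The peak function $h=e^{iz_1}$ supplied by Lemma~\ref{9-23-lem1} accomplishes this through the correction $|h|^{2k}-1$ with $k$ chosen large.
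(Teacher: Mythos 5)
Your construction is correct, and its skeleton --- produce on a small ball $B$ around $p$ a strictly plurisubharmonic function that is negative on $\overline\Omega\cap B$ and drops below a fixed negative constant on $\partial B\cap\overline\Omega$, then extend to all of $\Omega$ by taking the maximum with that constant --- is exactly the Grauert-type patching the paper performs. The difference lies in the local candidate. The paper works in the normalized coordinates of Lemma \ref{9-23-lem1} and simply takes $\varphi=\frac{A_0}{4}|z|^2-y_1$: since $y_1>\Phi_1\geq\frac{A_0}{2}|z|^2$ on $\Omega\cap U_0$, one gets $\varphi\leq-\frac{A_0}{4}|z|^2$ on $\overline\Omega\cap U_0$, so $\varphi$ is automatically below the negative constant $M=\max_{\partial B\cap\overline\Omega}\varphi$ on the patching sphere, while the quadratic term alone supplies the Hessian bound; neither the defining functions $\rho_j$ nor a peak function is needed. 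You instead obtain strict plurisubharmonicity from $\sum_j\rho_j$ and then must repair its vanishing on the corner stratum by adding $|e^{iz_1}|^{2k}-1$ with $k$ large --- correctly diagnosing that naive max-patching with $\sum_j\rho_j$ alone fails, and exploiting through the peak function $e^{iz_1}$ the very same inequality $y_1>\Phi_1\gtrsim|z|^2$ that the paper uses linearly via $-y_1$. Both arguments are sound; yours is slightly more roundabout (and mixes the original defining functions with the normalized coordinates, which is harmless since plurisubharmonicity and the positive lower bound on the complex Hessian over a relatively compact set are preserved, up to a constant, under the biholomorphic change of coordinates), whereas the paper's choice of $\varphi$ makes the uniform negativity on $\partial B\cap\overline\Omega$ immediate and shortens the proof.
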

\begin{proof}
Let $(V, z)$ be the coordinates given as in (\ref{9-24-a1}) and set $\varphi=\frac{A_0}4|z|^2-y_1$. Then $\varphi$ is strictly plurisubharmonic on $V$ and satisfies $\varphi(0)=0, \varphi(z)<0$ when $z\in U_0\cap\overline\Omega\setminus \{0\}$.  Take $r>0$ such that $\overline {\mathbb B^n(0, r)}\Subset U_0$. Set  $M=\max\{\varphi(z): z\in \partial \mathbb B^n(0, r)\cap\overline \Omega\}$.  Then $M<0$. Now we define $\psi$ as follows:
\begin{equation}
    \psi=\begin{cases}
        \max\{\varphi(z), M\}, &z\in \mathbb B^n(0, r)\cap\Omega\\
        M, &z\in\Omega\setminus \mathbb B^n(0, r).
    \end{cases}
\end{equation}
Then $\psi$ is a bounded and continuous plurisubharmonic function on $\Omega$ with $$\psi(0)=0, ~~\psi(z)<0, \forall z\in\overline\Omega\setminus\{0\}.$$
Furthermore, $\psi$ is equal to $\varphi$ near $0$ with $$\left(\frac{\partial^2\psi}{\partial z_i\partial\overline z_j}\right)= \frac{A_0}4 I_n $$
in some neighborhood $U$ of $0$ in $\mathbb C^n$. Moreover, $\psi>-c_0$ on $U$ for some positive constant $c_0$.
\end{proof}

It follows from Lemma \ref{Le1} and Corollary \ref{coro1-3-31} that 
\begin{corollary}\label{coro1-4-2}
    After shrinking the neighborhood $U_0$ given in Lemma \ref{le3-28-1}, we have the localization of the  Bergman canonical invariant: 
    $$\lim_{z\rightarrow p}\frac{J_{\Omega}(z)}{J_{\Omega\cap U_0}(z)}=1.$$
\end{corollary}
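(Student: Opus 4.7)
The plan is to recognize the statement as a direct specialization of Corollary~\ref{coro1-3-31}, and so to reduce matters to verifying the two hypotheses of Lemma~\ref{Le1} on the neighborhood $U_0$ (possibly further shrunk) constructed in Lemma~\ref{le3-28-1}.

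The plurisubharmonic function condition (\ref{property 1}) is supplied essentially by construction. Setting $\varphi:=\psi$, where $\psi$ is the function produced by Lemma~\ref{le3-28-1}, one has $\varphi:\Omega\to(-\infty,0)$ bounded from above by $0$, bounded below by some $-c_0$ on $U_0\cap\Omega$, and with complex Hessian at least $(A_0/4)I_n$ on $U_0\cap\Omega$. Hence $\varphi-c|z|^2$ is strongly plurisubharmonic on $U_0\cap\Omega$ for any $0<c<A_0/4$, which is exactly~(\ref{property 1}).

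For the local peak point condition, I will pass to the normalized coordinates provided by Lemma~\ref{9-23-lem1}: in a small neighborhood $V$ of $p=0$ one has $\Omega\cap V=\{\operatorname{Im} z_j>\Phi_j(z,\overline z):1\le j\le m\}$ with $\Phi_1(z,\overline z)=|z|^2+R_1(z,\overline z)$, $R_1=O(|z|^3)$. The holomorphic function $h(z):=e^{iz_1}$ then satisfies $h(0)=1$, and, after shrinking $V$ so that $\Phi_1\ge\frac12|z|^2$ on $V\setminus\{0\}$, one has $|h(z)|=e^{-\operatorname{Im} z_1}\le e^{-\Phi_1(z,\overline z)}<1$ for every $z\in\overline{\Omega\cap V}\setminus\{0\}$. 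So $h$ is a local holomorphic peak function at $p$, and $p$ is a local peak point of $\Omega$.

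With both hypotheses of Lemma~\ref{Le1} verified on $U_0$, Corollary~\ref{coro1-3-31} applies directly and delivers the stated limit. The only step requiring any thought at all is the construction of the peak function, but the normalization of Lemma~\ref{9-23-lem1} reduces this to a single line; the remainder is bookkeeping, so I anticipate no genuine obstacle beyond keeping track of the chain of invocations.
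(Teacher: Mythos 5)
Your proposal is correct and follows essentially the same route as the paper, which likewise deduces the corollary by feeding the function $\psi$ of Lemma~\ref{le3-28-1} into Lemma~\ref{Le1} and then invoking Corollary~\ref{coro1-3-31}; your explicit verification that $p$ is a local peak point via $h(z)=e^{iz_1}$ in the normalized coordinates is exactly the standard check the paper leaves implicit (with the harmless caveat that the quadratic part of $\Phi_1$ is $|z_1|^2+|(z_2,\dots,z_m)P_{m-1}|^2+|z''|^2$, so one should use $\Phi_1\ge c'|z|^2$ for a constant $c'>0$ depending on $P_{m-1}$ rather than $\tfrac12|z|^2$).
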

Assume that the Bergman metric on $\Omega^*$ is K\"ahler--Einstein.  
Proposition~\ref{prop1-4-3} then gives
\[
\lim_{z\to p}J_{\Omega\cap U_0}(z)=\frac{(n+1)^n\pi^n}{n!}.
\]  
 In the following, we show that, as $z\to p$, the limit of the Bergman canonical invariant $J_{\Omega\cap U_0}(z)$ depends on the local geometry of $\Omega$ near $p$ which  will produce a contradiction. 
 We next  prove the following:
 \begin{lemma}\label{le2-9-24}
\begin{equation}
    (n+1)^n\frac{\pi^n}{n!}\neq \frac{(n-m+2)^{n-m+1}2^{m-1}\pi^n}{(n-m+1)!}, \quad\text{for}~ n\geq m\geq 2.
\end{equation}
\end{lemma}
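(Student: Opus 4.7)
The plan is to cancel the common factor $\pi^n$ from both sides and reparametrize by $k=n-m+1$, which ranges over $1\le k\le n-1$ as $m$ ranges over $2\le m\le n$. The inequality then reduces to showing that
\[
A:=\frac{(n+1)^n}{n!}\neq g(k):=\frac{(k+1)^k\,2^{n-k}}{k!}\qquad\text{for every }1\le k\le n-1.
\]
In fact I intend to prove the stronger statement $A>g(k)$ strictly for every such $k$.

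The key observation is that $g(k)$ is monotone increasing in $k$. A direct computation gives
\[
\frac{g(k+1)}{g(k)}=\frac{1}{2}\left(\frac{k+2}{k+1}\right)^{k+1}=\frac{1}{2}\left(1+\frac{1}{k+1}\right)^{k+1}.
\]
The sequence $(1+\frac{1}{k+1})^{k+1}$ is classically increasing in $k$ toward $e$, and already at $k=1$ it equals $(3/2)^2=9/4>2$. Hence $g(k+1)/g(k)>1$ for all $k\ge 1$, so the maximum of $g$ on $\{1,\dots,n-1\}$ is attained at $k=n-1$, where one finds $g(n-1)=2n^{n-1}/(n-1)!=2n^n/n!$.

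It then suffices to compare $A$ with $g(n-1)$:
\[
\frac{A}{g(n-1)}=\frac{(n+1)^n}{2n^n}=\frac{1}{2}\left(1+\frac{1}{n}\right)^n.
\]
Again using the monotone increasing nature of $(1+1/n)^n$, for every $n\ge 2$ we have $(1+1/n)^n\ge (3/2)^2=9/4>2$, so $A/g(n-1)\ge 9/8>1$. Combined with the monotonicity of $g$, this yields $A>g(n-1)\ge g(k)$ for all $1\le k\le n-1$, which gives the desired strict inequality and completes the proof.

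The computation is elementary and I do not foresee a substantive obstacle; the only mild subtlety is the uniform-in-$n$ lower bound $(1+1/n)^n\ge 9/4$, which I would simply justify by monotonicity from the base case $n=2$ rather than by invoking the limit $e$. An alternative would be to bound $A$ and $g(k)$ separately by Stirling, but the ratio approach above is cleaner and completely avoids asymptotic estimates.
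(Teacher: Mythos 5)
Your argument is correct and is essentially the paper's proof in a different dress: writing $g(n)=\frac{(n+1)^n}{n!}=A$, your two steps (monotonicity of $g$ on $\{1,\dots,n-1\}$ plus $A>g(n-1)$) amount exactly to the paper's statement that the sequence $a_k=\frac{(k+1)^k}{2^k k!}$ is strictly increasing, since your ratio $\frac{g(k+1)}{g(k)}=\frac12\bigl(1+\frac{1}{k+1}\bigr)^{k+1}$ is the same ratio the paper computes. The only cosmetic difference is that the paper certifies $\bigl(1+\frac1n\bigr)^n>2$ directly by AM--GM with $x_1=2,\ x_2=\cdots=x_n=1$, whereas you cite the classical monotonicity of $\bigl(1+\frac1n\bigr)^n$ from the base case $n=2$; either justification is fine.
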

\begin{proof}
Let $k = n - m + 1$. Then $1 \leq k \leq n-1$, and the above becomes $$\pi^n \frac{(n+1)^n}{n!} \neq \pi^{k}\frac{(k+1)^k}{k!}(2\pi)^{n-k}$$ After obvious cancellation, we then need to prove
$$\frac{(n+1)^n}{2^n n!} \neq \frac{(k+1)^k}{2^k k!}$$ We will prove  this by showing that the sequence $a_n = \frac{(n+1)^n}{2^n n!}$ is strictly increasing. We calculate
\begin{align*}
\frac{a_{n}}{a_{n-1}} = \frac{(n+1)^{n}}{2^{n}n!} \frac{2^{n-1} (n-1)!}{n^{n-1}} = \frac{(n+1)^n}{2n^n}
\end{align*}
Recall the Arithmetic Mean--Geometric Mean inequality: $$\frac{x_1 + \cdots + x_n}{n} \geq \sqrt[\leftroot{-3}\uproot{3}n]{x_1 \cdots x_n}$$
and  the   equality holds  if and only if all the $x_j$ are the same. 

Let $x_1 = 2$, $x_2 = \cdots = x_n = 1$.
This gives $\frac{n+1}{n} > \sqrt[\leftroot{-3}\uproot{3}n]{2}$. Thus, $a_n / a_{n-1} > 1$ and  the sequence is strictly increasing.
\end{proof}
    Since we assumed that the Bergman metric on $\Omega$ is K\"ahler-Einstein on $\Omega^*$ for some neighborhood $U$ of $p$ in $\mathbb C^n$ and there are $C^2$-strongly pseudoconvex boundary points of $\partial\Omega$ near $p$, 
     by Proposition~ \ref{prop1-4-3},  we have  $$J_{\Omega}(z)\equiv \frac{(n+1)^n\pi^n}{n!}$$ on $U\cap\Omega$. It follows from the Corollary~\ref{coro1-4-2} that there exists a neighborhood $U_0\Subset U$ of $p$ in $\mathbb C^n$ such that $$\lim_{z\rightarrow p} J_{\Omega\cap U_0}(z)=\frac{(n+1)^n\pi^n}{n!}.$$ 
    Let the scaling map $L_{\delta}$ and the fractional linear mapping $\Phi$ be given as in section ~{\ref{sec2}}. Recall that $\widehat\Omega_{\delta, 0}=\Phi\circ L_{\delta}(\Omega\cap U_0)$. By Corollary ~\ref{coro1-9-24} one has
    \begin{equation}\label{9-24-a3}
    J_{\widehat\Omega_{\delta, 0}}\rightarrow J_{\mathcal{I}(\mathbb B^{n-m+1}\times \Delta^{m-1})}, \quad \text{as}~\delta\rightarrow 0\end{equation} 
    on any compact subset of $\mathcal I(\mathbb B^{n-m+1}\times \Delta^{m-1})$, where $m\geq 2$ is the positive integer number given in (\ref{9-24-a2}).
    We choose the coordinates $z$ defined in Lemma \ref{9-23-lem1}  and define $$\xi_{\delta}:=(i\delta^2, i\delta^{\frac32},\cdots, i\delta^{\frac32}, 0,\cdots, 0) .$$ Then $\xi_{\delta}\in\Omega\cap U_0$ when $\delta$ is sufficiently small and
$\xi_{\delta}\rightarrow 0,~\text{as}~\delta\rightarrow 0.$
Thus by the localization of the Bergman canonical invariant,
$$\lim_{\delta\rightarrow 0}J_{\Omega\cap U_0}(\xi_{\delta})=(n+1)^n\frac{\pi^n}{n!}.$$
On the other hand, since $J_{\Omega}$ is biholomorphically invariant and by Corollary \ref{coro1-9-24} we have
$$J_{\Omega\cap U_0}(\xi_{ \delta})=J_{\widehat\Omega_{\delta, 0}}(0, \cdots, 0)\rightarrow J_{\mathcal{I}(\mathbb B^{n-m+1}\times \Delta^{m-1})}( 0, \cdots, 0), \quad\text{as}~\delta\rightarrow 0.$$
This implies that 
\begin{equation}\label{9-24-a5}
J_{\mathcal{I}(\mathbb B^{n-m+1}\times \Delta^{m-1})}(0, \cdots, 0)=(n+1)^n\frac{\pi^n}{n!}.
\end{equation}
Since $K_{\mathbb B^{n-m+1}\times \Delta^{m-1}}=K_{\mathbb B^{n-m+1}}\cdot K_{\Delta^{m-1}}$, one has
$J_{\mathbb B^{n-m+1}\times \Delta^{m-1}}=J_{\mathbb B^{n-m+1}}\cdot J_{\Delta^{m-1}}.$ By a direct calculation, $$J_{\mathbb B^{n-m+1}}\equiv \frac{(n-m+2)^{n-m+1}\pi^{n-m+1}}{(n-m+1)!},\quad J_{\Delta^{m-1}}\equiv (2\pi)^{m-1}.$$ Thus,
\begin{equation}\label{9-24-a6}
    J_ {\mathbb B^{n-m+1}\times \Delta^{m-1}} \equiv \frac{(n-m+2)^{n-m+1}2^{m-1}\pi^n}{(n-m+1)!}.
\end{equation}
Since  $\mathcal I(\mathbb B^{n-m+1}\times\Delta^{m-1})$ is biholomorphic to $\mathbb B^{n-m+1}\times\Delta^{m-1}$ by the map $$\mathcal I(w_1, w_{m+1}, \cdots, w_n, w_{2}, \cdots, w_m)=(w_1,  \cdots, w_n),$$  we have $J_{\mathbb B^{n-m+1}\times\Delta^{m-1}}(0)=J_{\mathcal I(\mathbb B^{n-m+1}\times\Delta^{m-1})}(0).$
It follows from (\ref{9-24-a5}) and (\ref{9-24-a6}) that 
\begin{equation*}
    (n+1)^n\frac{\pi^n}{n!}=\frac{(n-m+2)^{n-m+1}2^{m-1}\pi^n}{(n-m+1)!}, n\geq m\geq 2.
\end{equation*}
This  contradicts Lemma \ref{le2-9-24}. By Remark \ref{remark1-10-21}, the Bergman metric can not be K\"ahler-Einstein in any open subset  of  $\Omega^\ast$. 
\end{proof}

\end{document}